\documentclass[reqno]{amsart}
\usepackage{graphicx} 
\usepackage[total={6.5in, 9in}]{geometry}

\usepackage{amsfonts, amsaddr}
\usepackage{amssymb}
\usepackage{color}
\usepackage{amsmath}
\usepackage{amsthm}
\usepackage{multicol}
\usepackage{mathtools}
\usepackage{tikz,tikz-cd, tcolorbox}
\usepackage[colorlinks=true,linkcolor=blue,citecolor=blue]{hyperref}
\usepackage{setspace, mathrsfs}
\usepackage{bussproofs}
\theoremstyle{plain}
\newtheorem{theorem}{Theorem}[section]
\newtheorem{lemma}[theorem]{Lemma}
\newtheorem{proposition}[theorem]{Proposition}
\newtheorem{corollary}[theorem]{Corollary}

\theoremstyle{definition}
\newtheorem{definition}[theorem]{Definition}
\newtheorem{remark}[theorem]{Remark}
\newtheorem{example}[theorem]{Example}

\begin{document}
\title[Monadic and cylindric expansions of bounded implication algebras]{Monadic and cylindric expansions of bounded implication algebras}
\author{Joseph McDonald }
\address{Department of Theoretical Computer Science\\Institute of Computer Science, Czech Academy of Sciences}

\email{mcdonald@cs.cas.cz}
\date{May 2026}
\maketitle

\begin{abstract}
Implication algebras were introduced by Abbott as algebraic models of the operation of Boolean implication in the classical propositional calculus. In this work, we study additional operators and constants on bounded implication algebras by introducing monadic and cylindric implication algebras. It is demonstrated that the category $\mathbf{MIA}$ of monadic implication algebras is isomorphic to the category $\mathbf{MBA}$ of monadic Boolean algebras and moreover, that the category $\mathbf{CIA}$ of $I$-dimensional cylindric implication algebras is isomorphic to the category $\mathbf{CBA}$ of $I$-dimensional cylindric Boolean algebras. As an application of the obtained categorical isomorphisms, we provide spectral duality results for $I$-dimensional cylindric implication algebras along the lines of Bezhanishvili and Holliday's spectral duality for Boolean algebras combined with McDonald's extension of their duality to monadic and $I$-dimensional cylindric Boolean algebras. 
\par
\vspace{.2cm}
\noindent \textbf{Keywords:} Bounded implication algebra, Monadic and cylindric Boolean algebra; Duality theory. 
\end{abstract}



\maketitle

\section{Introduction}
Implication algebras were introduced by Abbott \cite{abbott672} as algebraic models of the operation of Boolean implication in the classical propositional calculus. An \emph{implication algebra} is a binar $\langle A;\cdot\rangle$ satisfying the contraction, quasi-commutativity, and exchange equations. Abbott demonstrated that every implication algebra can be converted into a semi-Boolean algebra (i.e., a join-semilattice with a greatest element in which every principal filter forms a Boolean algebra) and conversely, that every semi-Boolean algebra gives rise to an implication algebra under the semi-Boolean algebra polynomial $\pi(x,y):=(x\vee y)'_y$ where $(x\vee y)'_y$ is the Boolean complement of $x\vee y$ in the principal filter generated by $y$. Abbott additionally stated (without giving an explicit proof) that by adjoining a certain constant to an implication algebra (resulting in a bounded implication algebra), which determines the least element in the induced join-semilattice, then one obtains a Boolean algebra, and moreover, that every Boolean algebra $B$ can be converted into a bounded implication algebra $\langle B;\pi_c,0\rangle$ under $\pi_c(x,y):=x'\vee y$. Section 2 of this paper is concerned with giving explicit constructions of these latter claims and then extending these results to that of an isomorphism between the category $\mathbf{IA_{0,1}}$ of bounded implication algebras and the category $\mathbf{BA}$ of Boolean algebras.    

We then investigate bounded implication algebras within the setting of monadic and cylindric Boolean algebras. Monadic Boolean algebras were introduced by Halmos \cite{halmos} as algebraic models of the classical single-variable predicate calculus. A \emph{monadic Boolean algebra} consists of a Boolean algebra $B$ equipped with a closure operator $\exists\colon B\to B$, known as a \emph{quantifier}, whose closed elements form a Boolean subalgebra. Cylindric Boolean algebras were introduced by Henkin, Monk, and Tarski \cite{tarski1, tarski2} as algebraic models of the classical $n$-variable predicate calculus with identity. An \emph{I-dimensional cylindric Boolean algebra} consists of a Boolean algebra $B$ equipped with a family $(\exists_i)_{i\in I}\colon B\to B$ of pairwise commuting quantifiers and a family $(\delta_{i,k})_{i,k\in I}$ of constants, known as the \emph{diagonal elements}, which are subject to certain conditions that algebraically model identity in a first-order language. Closely related to the monadic and cylindric Boolean algebras are the polyadic Boolean algebras introduced by Halmos \cite{halmos} as algebraic models of the classical $n$-variable predicate calculus without identity.  

In Section 3, we introduce monadic implication algebras. After establishing some of their basic algebraic properties, we demonstrate that the category $\mathbf{MIA}$ of monadic implication algebras is isomorphic to the category $\mathbf{MBA}$ of monadic Boolean algebras. Monadic implication algebras have been considered elsewhere (such as in \cite{abad}), however they study monadic implication algebras from the perspective of universal quantifiers (i.e., interior operators whose open elements form  Boolean subalgebras), whereas we study monadic implication algebras from the perspective of existential quantifiers. More importantly, whereas the monadic implication algebras considered here are bounded, the monadic implication algebras considered in \cite{abad} are not assumed to be bounded. Finally, we introduce $I$-dimensional cylindric implication algebras. It is demonstrated that the category $\mathbf{CIA}$ of $I$-dimensional cylindric implication algebras is isomorphic to the category $\mathbf{CBA}$ of $I$-dimensional cylindric Boolean algebras.

In Section 4, as an application of the obtained categorical isomorphisms, we provide a duality result for $I$-dimensional cylindric implication algebras along the lines of Bezhanishvili and Holliday's spectral duality for Boolean algebras \cite{bez} combined with McDonald's extension of their duality to monadic Boolean algebras and $I$-dimensional cylindric Boolean algebras \cite{mcdonald2}.         

\section{Bounded implication algebras}
In this section, we describe the implication algebras introduced in \cite{abbott672} and then give a explicit construction of, and extend, \cite[Theorem 19]{abbott672} to an isomorphism between $\mathbf{IA_{0,1}}$ and $\mathbf{BA}$. 
\subsection{Implication algebras and semi-Boolean algebras} In this subsection, we discuss some basic details of implication algebras and describe the construction given in \cite{abbott672} which associates to every implication algebra, a semi-Boolean algebra. 
\begin{definition}\label{bia}
    An \emph{implication algebra} is a binar $\langle A;\cdot\rangle$ satisfying:
    \begin{enumerate}
        \item $(x\cdot y)\cdot x=x$ \hfill (contraction)
        \item $(x\cdot y)\cdot y=(y\cdot x)\cdot x$ \hfill (quasi-commutativity)
        \item $x\cdot (y\cdot z)=y\cdot(x\cdot z)$ \hfill (exchange)
    \end{enumerate}
\end{definition}
The following are immediate consequences of the axioms of implication algebras. 
\begin{proposition}[\cite{abbott672}]\label{prop1}
    Any implication algebra satisfies: 
    \begin{enumerate}
        \item $x\cdot (x\cdot y)=x\cdot y$
        \item $x\cdot x=(x\cdot y)\cdot(x\cdot y)$
    \end{enumerate}
    \begin{proof}
        Definition \ref{bia}(1) yields $x\cdot(x\cdot y)=((x\cdot y)\cdot x)\cdot(x\cdot y)=x\cdot y$ which implies $x\cdot x=((x\cdot y)\cdot x)\cdot x=(x\cdot(x\cdot y))\cdot(x\cdot y)=(x\cdot y)\cdot(x\cdot y)$. This completes the proof.     
    \end{proof}
\end{proposition}

\begin{proposition}[\cite{abbott672}]\label{constant1}
    Any implication algebra $A$ admits of a constant $c_1\in A$ satisfying $x\cdot x=c_1$, $x\cdot c_1=c_1$, and $c_1\cdot x=x$. 
\end{proposition}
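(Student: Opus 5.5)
The plan is to show first that the element $x\cdot x$ does not depend on $x$, then to set $c_1:=x\cdot x$, and finally to read off the other two identities from Proposition \ref{prop1} and contraction.

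\textbf{The two easy identities.} Suppose we already know $x\cdot x=y\cdot y$ for all $x,y\in A$, and write $c_1$ for this common value.
\begin{itemize}
\item[(a)] For $x\cdot c_1=c_1$: write $c_1=x\cdot x$. Proposition \ref{prop1}(1) with $y:=x$ gives $x\cdot(x\cdot x)=x\cdot x$, that is, $x\cdot c_1=c_1$.
\item[(b)] For $c_1\cdot x=x$: by (a), $c_1=x\cdot c_1$. Hence $c_1\cdot x=(x\cdot c_1)\cdot x$, and this equals $x$ by contraction (Definition \ref{bia}(1)) with $y:=c_1$.
\end{itemize}
So everything reduces to the independence of $x\cdot x$ from $x$.

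\textbf{Reduction to an absorption law.} I will first aim to prove
\[
u\cdot(v\cdot v)=v\cdot v \qquad\text{for all } u,v. \tag{$\ast$}
\]
Assuming $(\ast)$, apply quasi-commutativity with $x\cdot x$ in place of $x$ and $y\cdot y$ in place of $y$:
\[
\bigl((y\cdot y)\cdot(x\cdot x)\bigr)\cdot(x\cdot x)=\bigl((x\cdot x)\cdot(y\cdot y)\bigr)\cdot(y\cdot y).
\]
By $(\ast)$ the inner factor on the left is $x\cdot x$, so the left side is $(x\cdot x)\cdot(x\cdot x)$. By $(\ast)$ again, with $u:=x\cdot x$ and $v:=x$, this equals $x\cdot x$. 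Symmetrically, the right side is $y\cdot y$, which gives $x\cdot x=y\cdot y$.

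\textbf{Proving $(\ast)$: the main obstacle.} By exchange, $u\cdot(v\cdot v)=v\cdot(u\cdot v)$, so $(\ast)$ is equivalent to
\[
v\cdot(u\cdot v)=v\cdot v,
\]
the algebraic form of ``$v\to(u\to v)$ is a tautology.'' My first attempt would combine Proposition \ref{prop1}(2) with the substitution $y:=u\cdot v$, which gives
\[
v\cdot v=\bigl(v\cdot(u\cdot v)\bigr)\cdot\bigl(v\cdot(u\cdot v)\bigr).
\]
I would then use three further facts to collapse the right-hand side to $v\cdot(u\cdot v)$:
\begin{itemize}
\item the contraction instance $v=\bigl(v\cdot(u\cdot v)\bigr)\cdot v$, i.e.\ $v=(v\cdot w)\cdot v$ with $w:=u\cdot v$;
\item quasi-commutativity applied to the pair $v$ and $u\cdot v$;
\item Proposition \ref{prop1}(1) in the form $v\cdot\bigl(v\cdot(u\cdot v)\bigr)=v\cdot(u\cdot v)$.
\end{itemize}

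This is the step I expect to need the most care. The substitutions chosen so far tend to return to the starting expression (for instance, $t=u\cdot(t\cdot t)$ with $t:=v\cdot(u\cdot v)$). If that route stalls, my fallback is to search, possibly by hand-guided equational reasoning, for a short chain of contraction, quasi-commutativity and exchange instances witnessing $v\cdot(u\cdot v)=v\cdot v$.
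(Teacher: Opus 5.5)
\textbf{Gap: $(\ast)$ is never proved.} Your surrounding reductions are correct. Once $x\cdot x$ is independent of $x$, parts (a) and (b) follow as you say. Your quasi-commutativity argument also correctly derives independence from $(\ast)$. But $(\ast)$ itself, on which everything rests, is not proved: you give candidate ingredients and a fallback search, not a derivation.

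Moreover, the three facts you list for collapsing $t\cdot t$ (with $t:=v\cdot(u\cdot v)$) cannot do the job on their own. Those specific instances, together with $v\cdot v=t\cdot t$, can all hold in a binar where $t\cdot t\neq t$.

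The missing ingredient is exchange, applied to $t\cdot w$ and to $t\cdot t$. Put $w:=u\cdot v$, so $t=v\cdot w$.
\begin{enumerate}
\item Contraction gives $t\cdot v=(v\cdot w)\cdot v=v$.
\item Exchange then gives $t\cdot w=t\cdot(u\cdot v)=u\cdot(t\cdot v)=u\cdot v=w$.
\item Exchange again gives $t\cdot t=t\cdot(v\cdot w)=v\cdot(t\cdot w)=v\cdot w=t$.
\item Proposition~\ref{prop1}(2) with $y:=w$ now yields $v\cdot v=t\cdot t=t=v\cdot(u\cdot v)=u\cdot(v\cdot v)$, which is $(\ast)$.
\end{enumerate}
With this inserted, your proof is complete. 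Quasi-commutativity on the pair $v,\,u\cdot v$ and Proposition~\ref{prop1}(1) are not needed at this step.

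\textbf{The paper's route.} The paper avoids $(\ast)$ altogether. Proposition~\ref{prop1}(2) says $x\cdot x$ is unchanged when $x$ is replaced by $x\cdot y$. Applying it twice gives $x\cdot x=\bigl((x\cdot y)\cdot y\bigr)\cdot\bigl((x\cdot y)\cdot y\bigr)$. By quasi-commutativity, $(x\cdot y)\cdot y=(y\cdot x)\cdot x$, so the same two steps run from the $y$ side give $y\cdot y$. (The paper's displayed chain ends in $x\cdot x$; that is a misprint for $y\cdot y$.)

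This is more direct than your route. $(\ast)$ then drops out as a corollary: $u\cdot(v\cdot v)=u\cdot(u\cdot u)=u\cdot u=v\cdot v$. So your plan proves a consequence of the goal first. It works once the exchange steps above are supplied, but it gains nothing over the paper's chain.
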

\begin{proof}
Proposition \ref{prop1} along with Definition \ref{bia}(2) yields the following equality: \[x\cdot x=(x\cdot y)\cdot(x\cdot y)=((x\cdot y)\cdot y)\cdot((x\cdot y)\cdot y)=((y\cdot x)\cdot x)\cdot((y\cdot x)\cdot x)=(y\cdot x)\cdot(y\cdot x)=x\cdot x.\] This implies the existence of a constant $c_1=x\cdot x$ that is independent of $x$ so that $c_1$ is well-defined. Moreover, we have $c_1\cdot x=(x\cdot x)\cdot x=x$ by Definition \ref{bia}(1) and $x\cdot c_1=x\cdot(x\cdot x)=x\cdot x=c_1$ by Proposition \ref{prop1}(1).  
\end{proof}
 
\begin{lemma}[\cite{abbott672}]\label{bia to poset}
    Every implication algebra $A$ determines a poset $\langle A;\preceq\rangle$ under $x\preceq y$ iff $x\cdot y=c_1$. Moreover, the constant $c_1\in A$ is the greatest element in this poset. In addition, $\langle A;\oplus\rangle$ is a join-semilattice under $x\oplus y:=(x\cdot y)\cdot y$.    
\end{lemma}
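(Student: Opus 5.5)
We verify the three parts of Lemma \ref{bia to poset} in turn, using Definition \ref{bia}, Proposition \ref{prop1} and Proposition \ref{constant1}.

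\emph{Partial order.} Reflexivity is immediate, since $x\cdot x=c_1$ by Proposition \ref{constant1}. For antisymmetry, suppose $x\cdot y=c_1=y\cdot x$. Then
\[x=c_1\cdot x=(y\cdot x)\cdot x=(x\cdot y)\cdot y=c_1\cdot y=y,\]
using quasi-commutativity in the middle step. For transitivity, I will first record a monotonicity fact. If $x\preceq y$, then for any $z$,
\[(y\cdot z)\cdot(x\cdot z)=x\cdot((y\cdot z)\cdot z)=x\cdot((z\cdot y)\cdot y)=(z\cdot y)\cdot(x\cdot y)=(z\cdot y)\cdot c_1=c_1,\]
by exchange, quasi-commutativity, exchange again, and Proposition \ref{constant1}. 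So $y\cdot z\preceq x\cdot z$. Now let $x\preceq y$ and $y\preceq z$. Then $y\cdot z=c_1$, and the fact gives $c_1\preceq x\cdot z$, that is, $c_1\cdot(x\cdot z)=c_1$. Since $c_1\cdot(x\cdot z)=x\cdot z$, we get $x\cdot z=c_1$. The constant $c_1$ is the greatest element because $x\cdot c_1=c_1$ gives $x\preceq c_1$ for every $x$.

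\emph{Join.} Put $x\oplus y=(x\cdot y)\cdot y$. It is an upper bound of $y$ because
\[y\cdot((x\cdot y)\cdot y)=(x\cdot y)\cdot(y\cdot y)=(x\cdot y)\cdot c_1=c_1,\]
by exchange. By quasi-commutativity $x\oplus y=y\oplus x$, so the same computation shows it is also an upper bound of $x$.

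The main obstacle is showing it is the least upper bound. Suppose $x\preceq z$ and $y\preceq z$. I want $((x\cdot y)\cdot y)\cdot z=c_1$. The plan has three steps.

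First, since $y\preceq z$, quasi-commutativity gives
\[z=c_1\cdot z=(y\cdot z)\cdot z=(z\cdot y)\cdot y.\]
So a lower bound on $z$ can be rewritten in terms of $y$ and $z\cdot y$.

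Second, apply the monotonicity fact to $x\preceq z$ with third argument $y$. This gives $z\cdot y\preceq x\cdot y$.

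Third, apply the monotonicity fact again, to $z\cdot y\preceq x\cdot y$, which reverses the order once more. This gives $(x\cdot y)\cdot y\preceq(z\cdot y)\cdot y=z$.

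This is the required inequality, so $\oplus$ is the least upper bound and $\langle A;\oplus\rangle$ is a join-semilattice. The only non-routine ingredient is the antitonicity fact in the first argument, and it drives both transitivity and the least-upper-bound step.
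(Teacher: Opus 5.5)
Your proof is correct and complete. The paper gives no argument of its own for this lemma; it only cites Abbott \cite{abbott672}. So there is no in-paper proof to compare with, and your derivation supplies the missing verification directly from the three axioms and Proposition \ref{constant1}.

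I checked each step:
\begin{enumerate}
\item Antisymmetry follows from $c_1\cdot x=x$ together with quasi-commutativity.
\item The antitonicity fact $x\preceq y\Rightarrow y\cdot z\preceq x\cdot z$ follows from the chain exchange, quasi-commutativity, exchange, ending in $(z\cdot y)\cdot c_1=c_1$.
\item Transitivity follows because $c_1\preceq x\cdot z$ forces $x\cdot z=c_1\cdot(x\cdot z)=c_1$.
\item The least-upper-bound step combines the rewriting $z=(z\cdot y)\cdot y$ (valid when $y\preceq z$) with two applications of antitonicity, the second one reversing the order back.
\end{enumerate}

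Isolating the single antitonicity fact is an economical choice, since it drives both transitivity and the least-upper-bound property. Once $(x\cdot y)\cdot y$ is shown to be the least upper bound of $x$ and $y$ in $\langle A;\preceq\rangle$, the semilattice laws for $\oplus$ follow automatically.
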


    A \emph{semi-Boolean algebra} is a bounded join-semilattice $\langle B;\vee,1\rangle$ such that ${\uparrow}x=\{y\in B:x\leq y\}$ is a Boolean algebra for each $x\in B$. The following associates to each implication algebra, a semi-Boolean algebra.    

\begin{theorem}[\cite{abbott672}]\label{bia to sba}
    Let $A$ be an implication algebra and let $\langle A;\oplus,c_1\rangle$ be the bounded join-semilattice obtained from $A$ via Lemma \ref{bia to poset}. Then $\langle {\uparrow}z;\oplus,\odot,-,z,c_1\rangle$ is a Boolean algebra for any $z\in A$ under $x\odot y:=(x\cdot(y\cdot z))\cdot z$ and $-x:=x\cdot z$.   
\end{theorem}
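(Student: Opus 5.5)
Fix $z\in A$. The plan is to check that ${\uparrow}z$ is closed under the proposed operations and then verify a standard axiomatization of Boolean algebras: a bounded distributive lattice with complements. Closure is easy: for $x,y\in{\uparrow}z$, the join $x\oplus y$ lies above $z$ by Lemma \ref{bia to poset}. For $-x=x\cdot z$, exchange and Proposition \ref{constant1} give $z\cdot(x\cdot z)=x\cdot(z\cdot z)=x\cdot c_1=c_1$, so $z\preceq x\cdot z$. The same computation, with $y\cdot z$ in place of $z$, shows that $x\odot y=(x\cdot(y\cdot z))\cdot z$ lies above $z$. The bounds are also immediate. The element $c_1$ is greatest by Lemma \ref{bia to poset}, and $z$ is least in ${\uparrow}z$ by definition.

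First I will record the elementary order facts. Namely, $x\preceq y\cdot x$ holds, since $x\cdot(y\cdot x)=y\cdot(x\cdot x)=c_1$. Next, $x\preceq y$ implies $y\cdot w\preceq x\cdot w$ (antitonicity) and $w\cdot x\preceq w\cdot y$. Finally, for $x\succeq z$ we have $(x\cdot z)\cdot z=x\oplus z=x$ (double negation), because $x\oplus z$ is the join and $z\preceq x$. Double negation is the key tool: it shows that $-$ is an order-reversing involution on ${\uparrow}z$.

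With this in hand, I will show that $\odot$ is the meet in ${\uparrow}z$ via De Morgan. Note that $x\odot y=-(x\cdot -y)$, so I want $x\cdot(y\cdot z)=(-x)\oplus(-y)$ for $x,y\succeq z$. Writing $x=(x\cdot z)\cdot z$ and using quasi-commutativity and exchange, this reduces to $((x\cdot z)\cdot z)\cdot(y\cdot z)=((x\cdot z)\cdot(y\cdot z))\cdot(y\cdot z)$. That identity is an instance of the standard implication-algebra identity $((a\cdot z)\cdot z)\cdot(b\cdot z)=((a\cdot z)\cdot (b\cdot z))\cdot(b\cdot z)$, which I would derive from exchange and quasi-commutativity. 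Since $-$ is an order-reversing involution on ${\uparrow}z$, the element $-((-x)\oplus(-y))$ is then the greatest lower bound of $x$ and $y$. So $\odot$ is the meet, and ${\uparrow}z$ is a bounded lattice. Complementation is then direct. We have $x\oplus(x\cdot z)=((x\cdot(x\cdot z))\cdot(x\cdot z))=(x\cdot z)\cdot(x\cdot z)=c_1$ by Proposition \ref{prop1}(1) and Proposition \ref{constant1}, and dually $x\odot -x=-(-x\oplus x)=-c_1=c_1\cdot z=z$.

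The main obstacle is distributivity. In a complemented lattice it suffices to prove that $x\odot y=z$ implies $x\preceq -y$, because that yields uniquely complemented and pseudocomplemented, hence Boolean. Alternatively, I would show that $a\mapsto x\cdot a$ restricted to ${\uparrow}z$ behaves as the relative pseudocomplement. Concretely, for $a,x,w\succeq z$ I want to show $w\odot x\preceq a$ iff $w\preceq x\cdot a$, since a lattice with relative pseudocomplements is distributive. My first attempt is to use exchange, $w\cdot(x\cdot a)=x\cdot(w\cdot a)$, together with the De Morgan description of $\odot$ and antitonicity, to translate both sides into the single condition $x\cdot(w\cdot a)=c_1$. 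This is the step where the raw equational manipulation in the axioms of Definition \ref{bia} is heaviest, and I expect to need an auxiliary identity such as $(x\cdot y)\cdot(x\cdot w)=x\cdot (y\cdot w)$ (self-distributivity), to be proved first from the axioms.
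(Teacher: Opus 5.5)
The paper gives no proof of this statement (it is quoted from Abbott), so the only question is whether your argument is complete. Everything up to the lattice structure is sound: closure, $t\preceq s\cdot t$, the two monotonicity facts and double negation make $-$ an order-reversing involution of ${\uparrow}z$, and $\oplus,\odot$ are then join and meet. The identity you defer in the meet step is left unproved. Its right side is $((a\cdot z)\cdot q)\cdot q$ with $q=b\cdot z\succeq z$, which equals $a\cdot q$ by Lemma \ref{lemma 1}. Alternatively, you can bypass it by checking that $x\cdot -y$ is the least upper bound of $-x,-y$ in ${\uparrow}z$, using your monotonicity facts and $(-u)\cdot((-u)\cdot z)=(-u)\cdot z=u$.

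The genuine gap is distributivity, the one property that separates a Boolean algebra from an arbitrary ortholattice. You describe two strategies but carry out neither, and you expect to need self-distributivity first. Your first reduction is also misstated. In a merely complemented lattice, ``$x\wedge y=0$ implies $x\le -y$'' for a chosen complement $-$ does not force Booleanness. The pentagon $N_5$ ($0<a<b<1$, with $c$ incomparable to $a,b$), with $-a=-b=c$ and $-c=b$, satisfies it, yet $c$ has two complements. You need the involution. Even then, ``uniquely complemented and pseudocomplemented implies Boolean'' is a real theorem (provable, e.g., via orthomodularity and commutativity of all pairs) that you would have to prove or cite.

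Your second route closes the gap at once, with no self-distributivity. For $w,x,a\in{\uparrow}z$ we have $w\cdot -x\succeq z$, so double negation gives $-(w\odot x)=w\cdot -x$. Since $-$ is an order-reversing involution on ${\uparrow}z$,
\[ w\odot x\preceq a\iff -a\preceq w\cdot(x\cdot z)\iff (a\cdot z)\cdot(w\cdot(x\cdot z))=c_1 . \]
Two applications of exchange together with $(a\cdot z)\cdot z=a$ rewrite the last term as $w\cdot(x\cdot a)$. Thus $w\odot x\preceq a$ iff $w\preceq x\cdot a$, where $x\cdot a\in{\uparrow}z$. So the map $w\mapsto w\odot x$ is a lower adjoint on ${\uparrow}z$, hence preserves $\oplus$, and distributivity follows. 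Likewise, the condition in your first route is immediate: $x\odot y=z$ gives $x\cdot -y=-(x\odot y)=-z=c_1$. So all the work on that route would sit in the lattice-theoretic theorem.
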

\begin{theorem}[\cite{abbott672}]\label{sba to bia}
    Let $A$ be a semi-Boolean algebra and let $z\in A$. Then $\langle{\uparrow}z;\pi_c^*\rangle$ is an implication algebra under $\pi_c^*(x,y):=(x\vee y)'_z$ where $(x\vee y)'_z$ is the Boolean complement of $x\vee y$ in the Boolean algebra ${\uparrow}z$. 
\end{theorem}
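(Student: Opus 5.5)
The plan is to reduce everything to computations inside the single Boolean algebra ${\uparrow}z$, where the three axioms of Definition \ref{bia} become routine Boolean identities. The one delicate point is what $(x\vee y)'_z$ means. Read literally, with the complement taken in ${\uparrow}z$, it is the relative ``nor'' $(x\vee y)'$, which does not satisfy contraction. So I read the polynomial as in the Introduction: $\pi_c^*(x,y)$ is the complement of $x\vee y$ in the principal filter ${\uparrow}y$. For $x,y\in{\uparrow}z$ this filter satisfies ${\uparrow}y\subseteq{\uparrow}z$. The first step is then to express this operation through the Boolean structure of ${\uparrow}z$.

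\textbf{Step 1: closure and an explicit formula.} Let $x,y\in{\uparrow}z$, and write $'$ for complementation in the Boolean algebra ${\uparrow}z$. The filter ${\uparrow}y$ is the interval $[y,1]$ of ${\uparrow}z$, with the same joins and meets. For $a\geq y$, the element $a'\vee y$ is a complement of $a$ in $[y,1]$: indeed $a\vee(a'\vee y)=1$, and by distributivity $a\wedge(a'\vee y)=(a\wedge a')\vee(a\wedge y)=z\vee y=y$, using that $z$ is the least element of ${\uparrow}z$. Complements in a distributive lattice are unique, so the complement of $x\vee y$ in ${\uparrow}y$ is
\[\pi_c^*(x,y)=(x\vee y)'\vee y=(x'\wedge y')\vee y=x'\vee y.\]
In particular $\pi_c^*(x,y)\geq y\geq z$, so $\pi_c^*$ is a well-defined binary operation on ${\uparrow}z$.

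\textbf{Step 2: the axioms.} With $x\cdot y=x'\vee y$ computed in the Boolean algebra ${\uparrow}z$, each axiom of Definition \ref{bia} is checked directly.
\begin{enumerate}
\item Contraction: $(x\cdot y)\cdot x=(x'\vee y)'\vee x=(x\wedge y')\vee x=x$ by absorption.
\item Quasi-commutativity: $(x\cdot y)\cdot y=(x\wedge y')\vee y=x\vee y$. This expression is symmetric in $x$ and $y$, so it equals $(y\cdot x)\cdot x$.
\item Exchange: $x\cdot(y\cdot w)=x'\vee y'\vee w=y\cdot(x\cdot w)$, using associativity and commutativity of $\vee$.
\end{enumerate}
Hence $\langle{\uparrow}z;\pi_c^*\rangle$ is an implication algebra.

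I expect the main obstacle to be Step 1: identifying the complement taken in the filter ${\uparrow}y$ with the Boolean term $x'\vee y$ of ${\uparrow}z$. This is what lets us avoid working separately in each of the Boolean algebras ${\uparrow}y$ for varying $y$. Once that identification is made, Step 2 is purely equational.
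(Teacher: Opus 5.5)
Your proof is correct. The paper gives no proof of this statement and only cites Abbott. Once your Step 1 identifies $\pi_c^*(x,y)$ with $x'\vee y$ computed in the Boolean algebra ${\uparrow}z$, justified correctly via uniqueness of complements in the interval $[y,1]$, your Step 2 is exactly the paper's verification in Theorem \ref{ba to bia}, applied to ${\uparrow}z$.

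You are also right that the statement cannot be taken literally. With the complement taken in ${\uparrow}z$, the case ${\uparrow}z=\{z,1\}$ already gives $(1\cdot 1)\cdot 1=z\cdot 1=z\neq 1$, so contraction fails. Your reading $(x\vee y)'_y$ is the one given in the paper's Introduction.
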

\subsection{Isomorphism between $\mathbf{IA_{0,1}}$ and $\mathbf{BA}$} We proceed by extending Theorems \ref{bia to sba} and \ref{sba to bia} to an isomorphism between the category $\mathbf{IA_{0,1}}$ of bounded implication algebras and the category $\mathbf{BA}$ of Boolean algebras. 
\begin{definition}\label{definition 2.7}
    A \emph{bounded implication algebra} is an implication algebra $A$ equipped with a constant element $c_0\in A$ satisfying $c_0\cdot x=1$ for all $x\in A$. 
\end{definition}

    Note that for any bounded implication algebra $A$, the constant $c_0\in A$ is the least element in the poset $\langle A;\preceq\rangle$. The following is due to Abbott \cite{abbott672} but was stated without proof. Hence, we fill in the details for the sake of clarity. 
\begin{theorem}\label{ba to bia}
    If $B$ is a Boolean algebra, $\langle B;\pi_c\rangle$ is a bounded implication algebra under $\pi_c(x,y):=x'\vee y$. 
\end{theorem}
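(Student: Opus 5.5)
We verify directly that $\pi_c(x,y)=x'\vee y$ satisfies the three equations of Definition \ref{bia}, and then exhibit the constant $c_0$ required by Definition \ref{definition 2.7}. Throughout we write $x\cdot y$ for $\pi_c(x,y)$ and compute in $B$ using De Morgan's laws, involution $x''=x$, distributivity, absorption, and the identities $x\vee x'=1$, $x\wedge x'=0$.

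For contraction, we compute
\[
(x\cdot y)\cdot x=(x'\vee y)'\vee x=(x\wedge y')\vee x=x
\]
by De Morgan and absorption. For exchange,
\[
x\cdot(y\cdot z)=x'\vee(y'\vee z)=y'\vee(x'\vee z)=y\cdot(x\cdot z),
\]
which uses only associativity and commutativity of $\vee$. Quasi-commutativity is the only step needing a genuine Boolean computation, and the identity to aim for is that both sides equal $x\vee y$. Indeed,
\[
(x\cdot y)\cdot y=(x'\vee y)'\vee y=(x\wedge y')\vee y=(x\vee y)\wedge(y'\vee y)=(x\vee y)\wedge 1=x\vee y,
\]
using distributivity and complementation. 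By symmetry $(y\cdot x)\cdot x=y\vee x$, so the two sides agree. This also shows that the join-semilattice operation $\oplus$ of Lemma \ref{bia to poset} coincides with $\vee$, which will be convenient later.

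For boundedness, we note that $x\cdot x=x'\vee x=1$, so the constant $c_1$ of Proposition \ref{constant1} is the top element $1$ of $B$. This is how we read the ``$1$'' appearing in Definition \ref{definition 2.7}. We then take $c_0:=0$ and check $c_0\cdot x=0'\vee x=1\vee x=1=c_1$ for every $x$. Hence $\langle B;\pi_c,0\rangle$ is a bounded implication algebra.

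There is no real obstacle in this argument. The only points requiring care are the distributivity step in quasi-commutativity and the identification of the constant $1$ in Definition \ref{definition 2.7} with $c_1$, which holds because $x\cdot x=1$.
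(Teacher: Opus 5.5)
Your proposal is correct and follows the paper's proof essentially step for step: contraction by De Morgan and absorption, quasi-commutativity by showing both sides reduce to $x\vee y$ via distributivity, exchange by associativity and commutativity of $\vee$, and boundedness via $0'\vee x=1$. Your explicit remark that $x\cdot x=x'\vee x=1$, so the constant $1$ in Definition \ref{definition 2.7} is $c_1$, is a small clarification that the paper leaves implicit.
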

\begin{proof}
    We first verify that $\pi_c$ satisfies the contraction equation by checking that $\pi_c(\pi_c(x,y),x)=x$. The definition of $\pi_c$, De Morgan's identities, the absorption identities, and the fact that $'$ is an involution yields: \[\pi_c(\pi_c(x,y),x)=\pi_c(x'\vee y,x)=(x'\vee y)'\vee x=(x''\wedge y')\vee x=(x\wedge y')\vee x=x.\] We now verify the quasi-commutativity equation by checking that  $\pi_c(\pi_c(x,y),y)=\pi_c(\pi_c(y,x),x)$:
    \begin{align*}
        \pi_c(\pi_c(x,y),y)&=\pi_c((x'\vee y),y)=(x'\vee y)'\vee y=(x''\wedge y')\vee y=(x\wedge y')\vee y\\&=(x\vee y)\wedge(y'\vee y)=(x\vee y)\wedge 1=x\vee y.
    \end{align*}
    Since $x\vee y=y\vee x$, reversing the above calculation applied to $y\vee x$ yields $y\vee x=\pi_c(\pi_c(y,x),x)$ and hence we obtain $\pi_c(\pi_c(x,y),y)=\pi_c(\pi_c(y,x),x)$. Finally, we now verify that $\pi_c$ satisfies the exchange equation by checking that $\pi_c(x,\pi_c(y,z))=\pi_c(y,\pi_c(x,z))$. The calculation proceeds as follows:
    \begin{align*}
        \pi_c(x,\pi_c(y,z))&=\pi_c(x,y'\vee z)=x'\vee(y'\vee z)=(x'\vee y')\vee z=(y'\vee x')\vee z
        \\&=y'\vee(x'\vee z)=y'\vee(\pi_c(x,z))=\pi_c(y,\pi_c(x,z)).
    \end{align*}
    This completes the proof that $\langle B;\pi_c\rangle$ is an implication algebra. To see that $\langle B;\pi_c\rangle$ is bounded, observe that $\pi_c(0,x)=0'\vee x=1\vee x=1$ for all $x\in B$.  
\end{proof}
\begin{theorem}\label{bia to ba}
    Every bounded implication algebra can be converted into a Boolean algebra. 
\end{theorem}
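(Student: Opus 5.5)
The plan is to apply Theorem \ref{bia to sba} with $z:=c_0$. For a bounded implication algebra $A$, the constant $c_0$ is the least element of the poset $\langle A;\preceq\rangle$ of Lemma \ref{bia to poset}. This holds because $c_0\cdot x=1$, where $1$ is read as $c_1$, so $c_0\preceq x$ for every $x\in A$. Consequently ${\uparrow}c_0=A$.

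Theorem \ref{bia to sba} then yields directly that
\[\langle A;\oplus,\odot,-,c_0,c_1\rangle\]
is a Boolean algebra, with operations
\[x\oplus y=(x\cdot y)\cdot y,\qquad x\odot y=(x\cdot(y\cdot c_0))\cdot c_0,\qquad -x=x\cdot c_0.\]
So the construction itself is immediate once the boundedness axiom is read correctly. I will state it explicitly, since the paper promises explicit constructions.

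Because the paper wants these constructions to underlie a categorical isomorphism, I will also record how the Boolean structure interacts with the original operation. The key identity is
\[x\cdot y=-x\oplus y.\]
To check it, expand $-x\oplus y=((x\cdot c_0)\cdot y)\cdot y$. By quasi-commutativity this equals $(y\cdot(x\cdot c_0))\cdot(x\cdot c_0)$, and exchange turns it into $(x\cdot(y\cdot c_0))\cdot(x\cdot c_0)$. Reducing this to $x\cdot y$ will use contraction, Proposition \ref{prop1}, and the facts $c_0\cdot x=c_1$ and $x\cdot c_1=c_1$ from Proposition \ref{constant1}.

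A cleaner route to the same identity works inside the Boolean algebra ${\uparrow}c_0$. There the relation $x\preceq y$ iff $x\cdot y=c_1$ makes the two maps $x\cdot(-)$ and $-x\oplus(-)$ both right adjoint to meet with $x$. Hence they coincide, and this is the approach I would try first. Together with Theorem \ref{ba to bia}, the identity shows that the two constructions are mutually inverse on objects, which is what the later isomorphism needs.

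The main obstacle is the bookkeeping in this last verification: Abbott's Theorem \ref{bia to sba} is used as a black box, so one has to take care that the Boolean order coincides with $\preceq$, and that $-$ is the complement relative to $c_0$ rather than relative to an arbitrary $z$. Everything else is a direct specialization of Theorem \ref{bia to sba}.
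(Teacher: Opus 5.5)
Your core argument is the paper's proof: from $c_0\cdot x=c_1$ you get $c_0\preceq x$ for all $x$, hence ${\uparrow}c_0=A$, and Theorem \ref{bia to sba} with $z=c_0$ gives the Boolean algebra $\langle A;\oplus,\odot,-,c_0,c_1\rangle$. The extra identity $x\cdot y=-x\oplus y$ is not needed for this statement; the paper gets it later, in Theorem \ref{theorem 2.12}, from Lemma \ref{lemma 1} with $z=c_0$. So it does not matter that your reduction of $(x\cdot(y\cdot c_0))\cdot(x\cdot c_0)$ to $x\cdot y$ and your adjunction argument are only sketched.
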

\begin{proof}
    The proof follows from Theorem \ref{bia to sba} by setting $z=c_0$ so that ${\uparrow}z=A$.  
\end{proof}
\begin{theorem}\label{theorem 2.10}
    Let $\mathfrak{B}=\langle B;\wedge,\vee,',0,1\rangle$ be a Boolean algebra, let $\mathfrak{B}^+=\langle B;\pi_c,0\rangle$ be the bounded implication algebra obtained from $\mathfrak{B}$ via Theorem \ref{ba to bia}, and let $(\mathfrak{B}^{+})_+=\langle B;\odot,\oplus,-,0,1\rangle$ be the Boolean algebra obtained from $\mathfrak{B}^+$ via Theorem \ref{bia to ba}. Then $\mathfrak{B}=(\mathfrak{B}^+)_+$. 
\end{theorem}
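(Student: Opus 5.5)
To prove $\mathfrak{B}=(\mathfrak{B}^+)_+$, I will show that the two structures share the carrier $B$ and that each operation and constant of $(\mathfrak{B}^+)_+$ coincides with its counterpart in $\mathfrak{B}$. By construction, $(\mathfrak{B}^+)_+$ arises from Theorem \ref{bia to sba} with $z=c_0=0$ and with the implication $x\cdot y=\pi_c(x,y)=x'\vee y$. Its operations are therefore
\[
x\oplus y=(x\cdot y)\cdot y,\qquad -x=x\cdot 0,\qquad x\odot y=(x\cdot(y\cdot 0))\cdot 0,
\]
and its constants are $0$ and $c_1$. Since ${\uparrow}0=B$, the carriers agree.

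The first step handles the constants. By Proposition \ref{constant1}, $c_1=x\cdot x=x'\vee x=1$, so the top constant agrees. The bottom constant $c_0=0$ agrees by the definition of $\mathfrak{B}^+$ in Theorem \ref{ba to bia}.

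Next comes the join. The computation already carried out in the proof of Theorem \ref{ba to bia} (the quasi-commutativity step) gives $(x'\vee y)'\vee y=x\vee y$. Hence $x\oplus y=x\vee y$.

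The complement is immediate: $-x=\pi_c(x,0)=x'\vee 0=x'$.

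The meet is the only step requiring more than a one-line check, though it is still routine. First, $y\cdot 0=y'$. Then $x\cdot y'=x'\vee y'$. Finally,
\[
(x'\vee y')\cdot 0=(x'\vee y')'=x''\wedge y''=x\wedge y,
\]
using De Morgan's law and involutivity of $'$. Hence $x\odot y=x\wedge y$.

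All five components agree, so the two algebras are literally identical, which gives $\mathfrak{B}=(\mathfrak{B}^+)_+$. I do not expect a real obstacle. The one point needing care is to use the Boolean-algebra operations exactly as defined in Theorem \ref{bia to sba} with $z=0$, and not to conflate the induced order $\preceq$ with $\le$. That identification follows anyway from the join computation, since $x\preceq y$ iff $x'\vee y=1$ iff $x\le y$.
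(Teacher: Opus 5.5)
Your proposal is correct and follows essentially the same route as the paper: both identify the carriers (and the orders $\le$ and $\preceq$), then unfold $\pi_c$ using De Morgan's laws and involutivity to check that $-x=x'$, $x\odot y=x\wedge y$, and $x\oplus y=x\vee y$. Your explicit check that $c_1=x'\vee x=1$ is a small detail the paper leaves implicit.
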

\begin{proof}
    Note that the carrier set of $\mathfrak{B}$ is identical to the carrier set of $(\mathfrak{B}^+)_+$ and $x\leq y$ iff $\pi_c(x,y)=1$ iff $x\preceq y$. Moreover, $x'=x'\vee0=\pi_c(x,0)=-x$ as well as:
    \begin{align*}
        x\wedge y&=(x'\vee y')'=(x'\vee(y'\vee 0))'=(x'\vee(y'\vee 0))'\vee 0=\pi_c((x'\vee(y'\vee0)),0)
        \\&=\pi_c(\pi_c(x,y'\vee 0),0)=\pi_c(\pi_c(x,\pi_c(y,0)),0)=x\odot y
    \end{align*}
    \begin{align*}
        x\vee y&=(x\vee y)\wedge 1=(x\vee y)\wedge(y\vee y')=(x''\vee y)\wedge(y\vee y')=(x''\wedge y')\vee y
        \\&=(x'\vee y)'\vee y=\pi_c(x'\vee y,y)=\pi_c(\pi_c(x,y),y)=x\oplus y.
    \end{align*}
    This completes the proof.
\end{proof} 
Th following will be exploited in the proof of Theorem \ref{theorem 2.12}. 
\begin{lemma}[\cite{abbott672}]\label{lemma 1}
    In any implication algebra, if $z\cdot y=c_1$, then $x\cdot y=((x\cdot z)\cdot y)\cdot y$.  
\end{lemma}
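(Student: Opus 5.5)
The plan is to compare both sides through the order $\preceq$ and join $\oplus$ of Lemma \ref{bia to poset}, using only the three axioms of Definition \ref{bia} together with Propositions \ref{prop1} and \ref{constant1}. By definition of $\oplus$, the right-hand side is $((x\cdot z)\cdot y)\cdot y=(x\cdot z)\oplus y$. So it suffices to show that, when $z\preceq y$, the element $x\cdot y$ is the least upper bound of $x\cdot z$ and $y$. Antisymmetry of $\preceq$ then gives equality. (As a sanity check, in Boolean terms this reads $x'\vee y=(x'\vee z)\vee y$ when $z\le y$.)

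\emph{Step 1: $x\cdot y$ is an upper bound.} First, $y\preceq x\cdot y$, because exchange and Proposition \ref{constant1} give $y\cdot(x\cdot y)=x\cdot(y\cdot y)=x\cdot c_1=c_1$. Second, $x\cdot z\preceq x\cdot y$, which needs monotonicity of $x\cdot(-)$. I would prove it directly. From $z\cdot y=c_1$ and $c_1\cdot y=y$ we get $(z\cdot y)\cdot y=y$, and quasi-commutativity turns this into $(y\cdot z)\cdot z=y$. Then
\[(x\cdot z)\cdot(x\cdot y)=(x\cdot z)\cdot\bigl(x\cdot((y\cdot z)\cdot z)\bigr).\]
Applying exchange repeatedly, the right-hand side becomes $x\cdot\bigl((y\cdot z)\cdot((x\cdot z)\cdot z)\bigr)$. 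This should collapse to $c_1$ via $u\cdot(v\cdot u)=c_1$, which holds because $u\cdot(v\cdot u)=v\cdot(u\cdot u)=v\cdot c_1=c_1$.

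\emph{Step 2: $x\cdot y$ is below the join.} Here I need $x\cdot y\preceq (x\cdot z)\oplus y$, and this is the main obstacle: an upper-bound argument alone does not give minimality without complements. The approach I would try first is a direct computation. Exchange gives $x\cdot y=x\cdot((y\cdot z)\cdot z)=(y\cdot z)\cdot(x\cdot z)$. Quasi-commutativity and exchange give
\[(x\cdot z)\oplus y=(y\cdot(x\cdot z))\cdot(x\cdot z)=(x\cdot(y\cdot z))\cdot(x\cdot z).\]
Writing $a=y\cdot z$ and $b=x\cdot z$, the goal becomes $a\cdot b=(x\cdot a)\cdot b$.

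To finish, I would show $(a\cdot b)\cdot((x\cdot a)\cdot b)=c_1$. The key facts are that $a\preceq x\cdot a$ (by Step 1's argument) and that $b=x\cdot z\succeq z$ (since $z\cdot(x\cdot z)=c_1$). I would use contraction $(u\cdot v)\cdot u=u$ to absorb the extra factor, and Proposition \ref{prop1}(1) to absorb the repeated $x$.

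If the direct computation stalls, the fallback is to work inside the Boolean algebra ${\uparrow}z$ of Theorem \ref{bia to sba}. All of $y$, $x\cdot z$, and $x\cdot y$ lie in ${\uparrow}z$. In ${\uparrow}z$ the identity reduces to a Boolean computation with $-u=u\cdot z$, provided one first checks that $x\cdot y$ is expressible there as $-(x\oplus z)\oplus y$.
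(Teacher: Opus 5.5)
The paper gives no proof of this lemma; it is quoted from Abbott. Your argument therefore has to stand on its own, and it has a genuine gap: the only nontrivial direction is not proved.

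\textbf{What works.} Step 1 is in effect a proof that $x\cdot(-)$ is monotone. The final collapse needs one more exchange, to $(y\cdot z)\cdot\bigl(x\cdot((x\cdot z)\cdot z)\bigr)$, and then $x\cdot((x\cdot z)\cdot z)=(x\cdot z)\cdot(x\cdot z)=c_1$. With Lemma~\ref{bia to poset} this gives $(x\cdot z)\oplus y\preceq x\cdot y$. Your rewriting of the converse as $a\cdot b\preceq(x\cdot a)\cdot b$, with $a=y\cdot z$ and $b=x\cdot z$, is also correct.

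\textbf{The gap.} That converse is the entire content of the lemma, and the proposal only announces a plan for it. The two ``key facts'' you single out do not drive it. By antitonicity, $a\preceq x\cdot a$ gives only $(x\cdot a)\cdot b\preceq a\cdot b$, which is the direction you already have. The fact $z\preceq b$ plays no role, since the needed inequality holds for arbitrary $a$. What is really required is an instance of self-distributivity, $x\cdot(a\cdot z)\preceq(x\cdot a)\cdot(x\cdot z)$, and you never say where contraction enters to give it. The fallback is circular. In ${\uparrow}z$, quasi-commutativity gives $-(x\oplus z)=((x\cdot z)\cdot z)\cdot z=x\cdot z$, so $-(x\oplus z)\oplus y$ is literally $((x\cdot z)\cdot y)\cdot y$. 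Thus ``checking that $x\cdot y$ is expressible as $-(x\oplus z)\oplus y$'' is the lemma itself, and Theorem~\ref{bia to sba} says nothing about how $\cdot$ acts on ${\uparrow}z$.

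\textbf{How to close it directly.} The missing idea is to use contraction in the form $b\cdot x=(x\cdot z)\cdot x=x$, together with an absorption fact: if $b\cdot x=x$ and $x\preceq s$, then $b\cdot s=s$. To see this, write $s=(x\cdot s)\cdot s=(s\cdot x)\cdot x$, so $b\cdot s=(s\cdot x)\cdot(b\cdot x)=(s\cdot x)\cdot x=s$. Take $s=x\oplus a$ and apply your monotonicity to $a\preceq x\oplus a$. This gives $b\cdot a\preceq b\cdot(x\oplus a)=x\oplus a=(x\cdot a)\cdot a$. Then exchange, quasi-commutativity $(b\cdot a)\cdot a=(a\cdot b)\cdot b$, and exchange again turn $(b\cdot a)\cdot((x\cdot a)\cdot a)=c_1$ into $(a\cdot b)\cdot((x\cdot a)\cdot b)=c_1$, as required.

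\textbf{How to close it via Theorem~\ref{bia to sba}.} Work in ${\uparrow}y$ rather than ${\uparrow}z$. Put $p=x\oplus y$ and $q=(x\cdot z)\cdot y$. Then $p\cdot y=x\cdot y$ and $q\cdot y=((x\cdot z)\cdot y)\cdot y$, so it suffices to show $p=q$. First, $p\preceq q$ follows from your Step 1 by antitonicity. Second, $q\preceq p$ holds because $(q\cdot y)\oplus p=(x\oplus(x\cdot z))\oplus y=c_1$, using $x\oplus(x\cdot z)=(x\cdot z)\cdot(x\cdot z)=c_1$ by Proposition~\ref{prop1}(1).
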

\begin{theorem}\label{theorem 2.12}
    Let $\mathfrak{A}=\langle A;\cdot,c_0\rangle$ be a bounded implication algebra, let $\mathfrak{A}_+=\langle A;\odot,\oplus,-,c_0,c_1\rangle$ be the Boolean algebra obtained from $\mathfrak{A}$ via Theorem \ref{bia to ba}, and let $(\mathfrak{A}_+)^+=\langle A;\pi_c,c_0\rangle$ be the bounded implication algebra obtained from $\mathfrak{A}_+$ via Theorem \ref{ba to bia}. Then $\mathfrak{A}=(\mathfrak{A}_+)^+$. 
\end{theorem}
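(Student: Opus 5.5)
Since the carrier sets and the constant $c_0$ of $\mathfrak{A}$ and $(\mathfrak{A}_+)^+$ coincide, it suffices to show that $\pi_c(x,y)=x\cdot y$ for all $x,y\in A$. In $\mathfrak{A}_+$ (Theorem \ref{bia to sba} with $z=c_0$) we have $-x=x\cdot c_0$ and $x\oplus y=(x\cdot y)\cdot y$. Hence
\[\pi_c(x,y)=(-x)\oplus y=((x\cdot c_0)\cdot y)\cdot y,\]
and the task is to prove
\[((x\cdot c_0)\cdot y)\cdot y=x\cdot y.\]

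This is exactly the shape of Lemma \ref{lemma 1}, taken with $z=c_0$. Its hypothesis $c_0\cdot y=c_1$ holds for every $y$ by Definition \ref{definition 2.7}, where the constant $1$ there is $c_1$ of Proposition \ref{constant1}. So Lemma \ref{lemma 1} gives $x\cdot y=((x\cdot c_0)\cdot y)\cdot y$ directly, and therefore $\pi_c(x,y)=x\cdot y$.

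The only care needed is bookkeeping about the Boolean structure of Theorem \ref{bia to sba}. With $z=c_0$, the bottom of ${\uparrow}c_0=A$ is $c_0$ and the top is $c_1$, so the constants match. The complement is $-x=x\cdot c_0$. The join is $\oplus$, so the Boolean join used in $\pi_c$ is $\oplus$ rather than anything else. Once these identifications are made, the computation is a single application of Lemma \ref{lemma 1}, so I expect no real obstacle.
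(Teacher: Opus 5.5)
Your proposal is correct and is essentially the paper's own proof: identify $\pi_c(x,y)=(-x)\oplus y=((x\cdot c_0)\cdot y)\cdot y$ in $\mathfrak{A}_+$, then apply Lemma~\ref{lemma 1} with $z=c_0$, whose hypothesis $c_0\cdot y=c_1$ is exactly Definition~\ref{definition 2.7}. Your extra bookkeeping, namely that ${\uparrow}c_0=A$ with bottom $c_0$ and top $c_1$ so the constants of the two structures coincide, only makes explicit what the paper leaves implicit.
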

\begin{proof}
    Observe that since $c_0\cdot y=c_1$, by Lemma \ref{lemma 1} we have: \[x\cdot y=((x\cdot c_0)\cdot y)\cdot y=(x\cdot c_0)\oplus y=x^*\oplus y=\pi_c(x,y)\] which completes the proof. 
\end{proof}
\begin{definition}
    Let $A$ and $B$ be bounded implication algebras. A function $h\colon A\to B$ is a \emph{homomorphism} provided $h(x\cdot y)=h(x)\cdot h(y)$ and $h(c_0)=c_0$. 
\end{definition}
Notice that the above notion of homomorphism implies $h(c_1)=c_1$ since $h(c_1)=h(x\cdot x)=h(x)\cdot h(x)=c_1$. 
\begin{definition}
    By $\mathbf{IA_{0,1}}$ we denote the category of bounded implication algebras and bounded implication algebra homomorphisms and by $\mathbf{BA}$ we denote the category of Boolean algebras and Boolean algebra homomorphisms. 
\end{definition}
\begin{theorem}\label{theorem 2.15}
    $\mathbf{IA_{0,1}}$ is isomorphic to $\mathbf{BA}$. 
\end{theorem}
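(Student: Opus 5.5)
We will define two functors and show they are mutually inverse. The functor $F\colon\mathbf{BA}\to\mathbf{IA_{0,1}}$ sends a Boolean algebra $\mathfrak{B}$ to $\mathfrak{B}^+=\langle B;\pi_c,0\rangle$ from Theorem \ref{ba to bia}. It acts as the identity on underlying functions of morphisms. The functor $G\colon\mathbf{IA_{0,1}}\to\mathbf{BA}$ sends $\mathfrak{A}$ to $\mathfrak{A}_+$ from Theorem \ref{bia to ba}, and again acts as the identity on underlying functions. Since both functors leave carriers and underlying maps unchanged, preservation of identities and composition is automatic. Once well-definedness on morphisms is checked, Theorems \ref{theorem 2.10} and \ref{theorem 2.12} give $G\circ F=1_{\mathbf{BA}}$ and $F\circ G=1_{\mathbf{IA_{0,1}}}$ on objects. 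Identity on morphisms is then trivial, so the two categories are isomorphic (not merely equivalent).

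The first morphism check is that a Boolean homomorphism $h\colon B\to C$ is a bounded implication homomorphism $B^+\to C^+$. This is immediate: $h(\pi_c(x,y))=h(x'\vee y)=h(x)'\vee h(y)=\pi_c(h(x),h(y))$ and $h(0)=0$.

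The second, and main, check is the converse: a bounded implication homomorphism $h\colon A\to A'$ is a Boolean homomorphism $A_+\to A'_+$. Since the operations of $A_+$ are the polynomials $x\oplus y=(x\cdot y)\cdot y$, $-x=x\cdot c_0$, and $x\odot y=(x\cdot(y\cdot c_0))\cdot c_0$, with constants $c_0$ and $c_1$, each is preserved by any map preserving $\cdot$ and $c_0$. The remark after the definition of homomorphism already gives $h(c_1)=c_1$. For example, $h(-x)=h(x\cdot c_0)=h(x)\cdot h(c_0)=h(x)\cdot c_0=-h(x)$, and the other operations follow by the same term-by-term substitution.

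The only point requiring care is that the Boolean structure in Theorem \ref{bia to ba} is defined from $\cdot$ and $c_0$ by term operations, as displayed in Theorem \ref{bia to sba} with $z=c_0$. This is exactly what makes homomorphism preservation routine. I will state this explicitly, since it underlies both the morphism verification and the identity of the two functors on hom-sets.
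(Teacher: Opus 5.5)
Your proposal is correct and takes the same route as the paper: Theorems~\ref{theorem 2.10} and~\ref{theorem 2.12} supply the correspondence on objects, and what remains is to check that the same underlying maps are homomorphisms in both signatures. Your observation that every operation of $\mathfrak{A}_+$ is a term in $\cdot$ and $c_0$ is what actually justifies the paper's ``remaining cases run analogously.'' Your computation $h(\pi_c(x,y))=\pi_c(h(x),h(y))$ also corrects the stray prime in the paper's version, which writes $\pi_c(h(x)',h(y))$.
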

\begin{proof}
    By Theorem \ref{theorem 2.10} and Theorem \ref{theorem 2.12}, it suffices to exhibit a bijective correspondence between homomorphisms of bounded implication algebras and homomorphisms of Boolean algebras. We have: \[h(\pi_c(x,y))=h(x'\vee y)=h(x')\vee h(y)=h(x)'\vee h(y)=\pi_c(h(x)',h(y))\] \[h(x\oplus y)=h((x\cdot y)\cdot y)=h(x\cdot y)\cdot h(y)=(h(x)\cdot h(y))\cdot h(y)=h(x)\oplus h(y).\] The remaining cases run analogously.   
\end{proof}
\section{Monadic and cylindric implication algebras}
In this section, we extend the results collected in Section 2 to an isomorphism between $\mathbf{MIA}$ and $\mathbf{MBA}$ and then between $\mathbf{CIA}$ and $\mathbf{CBA}$. 
\subsection{Monadic and cylindric Boolean algebras} In this subsection, we describe some of the basic aspects of monadic and cylindric Boolean algebras. For more details pertaining to the former, consult \cite{halmos} and for the latter, consult \cite{tarski1, tarski2}.   

\begin{definition}\label{monadic ortholattice}
A \emph{monadic Boolean algebra} is an algebra $\langle A;\wedge,\vee,',0,1,\exists\rangle$ such that: 
\begin{enumerate}
    \item $\langle A;\wedge,\vee,',0,1\rangle$ is a Boolean algebra;
    \item $\exists\colon A\to A$ is a unary operator, known as a \emph{quantifier}, satisfying:
        \begin{enumerate}
        \item $\exists 0=0$ and $x\leq \exists x$;
        \item $\exists(x\wedge\exists y)=\exists x\wedge\exists y$. 
    \end{enumerate}
\end{enumerate}
\end{definition}
The motivating example of a monadic Boolean algebra arises by considering the function space from a non-empty set into a complete Boolean algebra. 
\begin{example}
    Let $X$ be a non-empty set and let $B$ be a complete Boolean algebra. Then the algebra $\langle B^X;\cdot,+,-,c_0,c_1\rangle$ where  $B^X$ is the function space from $X$ to $B$ and 
    \begin{enumerate} 
        \item $(f\cdot g)(x)=f(x)\wedge g(x)$; 
        \item $(f+g)(x)=f(x)\vee g(x)$; 
        \item $-f(x)=f(x)'$; 
        \item $c_0(x)=0$, $c_1(x)=1$, 
    \end{enumerate}
    is a Boolean algebra, known as the \emph{full functional Boolean algebra} determined by $X$ and $B$. Defining: 
    \[\exists\colon B^X\to B^X;\hspace{.4cm}(\exists f)(x)=\bigvee\text{ran}(f)=\bigvee\{f(x):x\in X\},\] induces a quantifier on $B^X$ making $\langle B^X;\cdot,+,-,c_0,c_1,\exists\rangle$ a monadic Boolean algebra, known as the \emph{full functional monadic Boolean algebra} determined by $X$ and $B$.   
\end{example}

\begin{definition}\label{quantum cylindric algebra}
    An \emph{I-dimensional cylindric Boolean algebra} is an algebra: \[\langle A;\wedge,\vee,',0,1,(\exists_i)_{i\in I},(\delta_{i,k})_{i,k\in I}\rangle\] satisfying the following conditions: 
    \begin{enumerate}
    \item $\langle A;\wedge,\vee,',0,1,\exists_i\rangle$ is a monadic Boolean algebra for each $i\in I$; 
        \item $\exists_i\exists_kx=\exists_k\exists_ix$ for all $i,k\in I$; 
        \item $(\delta_{i,k})_{i,k\in I}$ is a family of constants, known as the \emph{diagonal elements}, satisfying: 
            \begin{enumerate}
        \item $\delta_{i,k}=\delta_{k,i}$ and $\delta_{i,i}=1$;
        \item $i,l\not=k\Rightarrow\exists_{k}(\delta_{i,k}\wedge \delta_{k,l})=\delta_{i,l}$; 
        \item $i\not=k\Rightarrow\exists_i(d_{i,k}\wedge x)\wedge\exists_i(d_{i,k}\wedge x')=0$. 
    \end{enumerate}
    \end{enumerate}
    We call the reduct $\langle A;\wedge,\vee,^{\perp},0,1,(\exists_{i})_{i\in I}\rangle$ the \emph{I-dimensional diagonal-free cylindric Boolean algebra reduct} of $A$ provided conditions 1-2 are satisfied.   
\end{definition}
\begin{example}
     $I$-dimensional cylindric Boolean algebras arise from taking the Boolean algebra induced by the function space $X^I$ where $X$ is a non-empty set. Each $f\in X^I$ is an $I$-indexed family and for any subset $F\subseteq X^I$, its cylindrification $\exists_iF$ is the set of all such indexed families $g$ that agree with some $f\in F$ expect possibly at the coordinate $i$. The diagonal $\delta_{i,k}$ is then given by $d_{i,k}=\{f:f(i)=f(k)\}$.  
\end{example}

\subsection{Monadic implication algebras} We now introduce monadic implication algebras and establish some of their basic properties. 

\begin{definition}\label{mbia}
    A \emph{monadic implication algebra} is a bounded implication algebra $A$ equipped with an additional operator $\nabla\colon A\to A$ satisfying the following conditions: 
    \begin{enumerate} 
        \item $\nabla c_0=c_0$ and $x\cdot\nabla x=c_1$;
        \item $\nabla((x\cdot(\nabla y\cdot c_0))\cdot c_0)=((\nabla x\cdot(\nabla y\cdot c_0))\cdot c_0)$. 
    \end{enumerate}
\end{definition}
The following  collects some useful facts about monadic implication algebras. 
\begin{proposition}\label{idempotent}
    In any monadic implication algebra, $\nabla$ preserves the constant $c_1$ and is idempotent.  
\end{proposition}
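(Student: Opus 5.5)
Both claims come out of Definition \ref{mbia} together with the dictionary of Section 2. In $\mathfrak{A}_+$ we have $-x=x\cdot c_0$ and $x\odot y=(x\cdot(y\cdot c_0))\cdot c_0$. So axiom (2) reads
\[\nabla(x\odot\nabla y)=\nabla x\odot\nabla y,\]
and axiom (1) says $\nabla c_0=c_0$ and $x\preceq\nabla x$ (Lemma \ref{bia to poset}). Since $\preceq$ is the order of $\mathfrak{A}_+$, the pair $\langle\mathfrak{A}_+,\nabla\rangle$ is a monadic Boolean algebra in the sense of Definition \ref{monadic ortholattice}. Still, I will argue directly from the implication-algebra axioms wherever possible, so the proof does not depend on Section 3's later isomorphism.

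\textbf{Preservation of $c_1$.} Put $x=c_1$ in axiom (1) to get $c_1\cdot\nabla c_1=c_1$. By Proposition \ref{constant1}, $c_1\cdot y=y$ for every $y$, so $c_1\cdot\nabla c_1=\nabla c_1$. Hence $\nabla c_1=c_1$.

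\textbf{Idempotence.} Put $x=c_1$ in axiom (2). The inner term is $(c_1\cdot(\nabla y\cdot c_0))\cdot c_0=(\nabla y\cdot c_0)\cdot c_0$, again by $c_1\cdot z=z$. In $\mathfrak{A}_+$ this is the double complement $--\nabla y$, which equals $\nabla y$ by Theorem \ref{bia to sba} with $z=c_0$ (involutivity of complement). So the left side of (2) becomes $\nabla\nabla y$. The right side is $(\nabla c_1\cdot(\nabla y\cdot c_0))\cdot c_0$; using $\nabla c_1=c_1$ and the same two simplifications, it equals $\nabla y$. Therefore $\nabla\nabla y=\nabla y$.

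\textbf{Main obstacle.} The only nonroutine point is justifying $(u\cdot c_0)\cdot c_0=u$. This follows from Theorem \ref{bia to sba} as noted. Alternatively, quasi-commutativity gives $(u\cdot c_0)\cdot c_0=(c_0\cdot u)\cdot u=c_1\cdot u=u$, using Definition \ref{definition 2.7} (where the paper's $1$ is $c_1$). This second route is cleaner and purely equational, and I would present it.
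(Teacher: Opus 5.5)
Your proof is correct and matches the paper's: the paper likewise gets $\nabla c_1=c_1$ from axiom (1) at $x=c_1$, though it pairs $c_1\cdot\nabla c_1=c_1$ with $\nabla c_1\cdot c_1=c_1$ and appeals to antisymmetry, while you use $c_1\cdot\nabla c_1=\nabla c_1$ directly. For idempotence it instantiates axiom (2) at $x=c_1$ after rewriting $\nabla x=(c_1\cdot(\nabla x\cdot c_0))\cdot c_0$ via the same quasi-commutativity chain $(\nabla x\cdot c_0)\cdot c_0=(c_0\cdot\nabla x)\cdot\nabla x=c_1\cdot\nabla x$ that you chose to present.
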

\begin{proof}
    Part 2 of Definition \ref{mbia}(1) yields $c_1\cdot \nabla c_1=c_1$ and Proposition \ref{constant1} gives $\nabla c_1\cdot c_1=c_1$ and hence $\nabla c_1=c_1$. To see that $\nabla$ is idempotent, observe that by Definition \ref{definition 2.7}, Proposition \ref{constant1}, and Definition \ref{mbia}(2): 
\begin{equation}
    \nabla x=c_1\cdot \nabla x=(c_0\cdot \nabla x)\cdot \nabla x=(\nabla x\cdot c_0)\cdot c_0=(c_1\cdot(\nabla x\cdot c_0))\cdot c_0
\end{equation}
    Since $\nabla c_1=c_1$, Definition \ref{mbia}(2) along with Equation (1) yields: \[\nabla\nabla x=\nabla((c_1\cdot(\nabla x\cdot c_0))\cdot c_0)=(\nabla c_1\cdot(\nabla x\cdot c_0))\cdot c_0=(c_1\cdot(\nabla x\cdot c_0))\cdot c_0=\nabla x.\] This completes the proof. 
\end{proof}
If $A$ is a monadic implication algebra, let $\text{ran}(\nabla):=\{\nabla x:x\in A\}$. 
\begin{proposition}\label{prop 3.7}
    In any monadic implication algebra $A$, we have: \[\text{ran}(\nabla)=\{x\in A:\nabla x=x\}.\] 
\end{proposition}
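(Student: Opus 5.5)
The plan is to prove the two inclusions separately, using only the idempotence established in Proposition \ref{idempotent}.

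For the inclusion $\{x\in A:\nabla x=x\}\subseteq\text{ran}(\nabla)$, I take $x$ with $\nabla x=x$. Then $x=\nabla x$ is visibly the image of $x$ under $\nabla$, so $x\in\text{ran}(\nabla)$. Nothing beyond the definition of $\text{ran}(\nabla)$ is needed here.

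For the reverse inclusion $\text{ran}(\nabla)\subseteq\{x\in A:\nabla x=x\}$, I take an arbitrary element of $\text{ran}(\nabla)$ and write it as $y=\nabla x$ for some $x\in A$. I then apply Proposition \ref{idempotent}, which gives $\nabla\nabla x=\nabla x$, so $\nabla y=\nabla\nabla x=\nabla x=y$. Hence $y$ is a fixed point of $\nabla$.

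Combining the two inclusions yields the stated equality. I do not expect any step to be a genuine obstacle: all the substantive work was the idempotence computation in Proposition \ref{idempotent}, which used Definition \ref{mbia}(2) together with $\nabla c_1=c_1$. Here we only need to record that a map is idempotent exactly when its range coincides with its set of fixed points.
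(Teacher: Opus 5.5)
Your proposal is correct and matches the paper's proof exactly: the inclusion $\text{ran}(\nabla)\subseteq\{x\in A:\nabla x=x\}$ follows from the idempotence in Proposition \ref{idempotent}, and the reverse inclusion is immediate from the definition of $\text{ran}(\nabla)$.
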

\begin{proof}
    Suppose that $x\in\text{ran}(\nabla)$ so that $x=\nabla y$ for some $y\in A$. Then since $\nabla$ is idempotent by Proposition \ref{idempotent}, we obtain $\nabla x=\nabla\nabla y=\nabla y=x$, as desired. The $\{x\in A:\nabla x=x\}\subseteq\text{ran}(\nabla)$ inclusion is trivial.   
\end{proof}
\begin{proposition}\label{prop 3.8}
    Any monadic implication algebra satisfies: 
    \begin{enumerate}
        \item if $x\cdot\nabla y=c_1$, then $\nabla x\cdot\nabla y=c_1$; 
        \item if $x\cdot y=c_1$, then $\nabla x\cdot\nabla y=c_1$. 
    \end{enumerate}
\end{proposition}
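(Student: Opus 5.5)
The plan is to translate everything into the Boolean algebra $\mathfrak{A}_+$ furnished by Theorem \ref{bia to ba}, where $x\preceq y$ iff $x\cdot y=c_1$. By Theorem \ref{theorem 2.12} we have $x\cdot y=\pi_c(x,y)=-x\oplus y$, with $-x=x\cdot c_0$. Theorem \ref{theorem 2.10} and Theorem \ref{bia to sba} with $z=c_0$ identify the meet as $x\odot y=(x\cdot(y\cdot c_0))\cdot c_0$. Under this dictionary, Definition \ref{mbia}(2) reads
\[\nabla(x\odot \nabla y)=\nabla x\odot\nabla y,\]
which is exactly Halmos' axiom for a quantifier. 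Definition \ref{mbia}(1) says $\nabla c_0=c_0$ and $x\preceq\nabla x$. I will therefore argue order-theoretically in the Boolean algebra $\langle A;\odot,\oplus,-,c_0,c_1\rangle$, using $u\preceq v$ iff $u\odot v=u$.

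For (1), I assume $x\cdot\nabla y=c_1$, i.e.\ $x\preceq\nabla y$, so $x\odot\nabla y=x$. Applying $\nabla$ and using Definition \ref{mbia}(2) gives
\[\nabla x=\nabla(x\odot\nabla y)=\nabla x\odot\nabla y,\]
so $\nabla x\preceq\nabla y$, i.e.\ $\nabla x\cdot\nabla y=c_1$. The only real work here is justifying the translations $x\odot\nabla y=x\iff x\preceq\nabla y$ and $\nabla x\odot\nabla y=\nabla x\iff\nabla x\cdot\nabla y=c_1$. Both follow because the order $\preceq$ of Lemma \ref{bia to poset} is the lattice order of the Boolean algebra $\mathfrak{A}_+$, in which $\odot$ is the meet.

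For (2), I assume $x\cdot y=c_1$, i.e.\ $x\preceq y$. By Definition \ref{mbia}(1) we have $y\preceq\nabla y$, so transitivity of $\preceq$ (Lemma \ref{bia to poset}) gives $x\preceq\nabla y$, that is, $x\cdot\nabla y=c_1$. Part (1) then yields $\nabla x\cdot\nabla y=c_1$. If one prefers to stay inside the implication-algebra language, transitivity can be derived from exchange: from $x\cdot y=c_1$ and $y\cdot\nabla y=c_1$ one gets $x\cdot\nabla y=c_1$. But citing the poset structure is enough.

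The main obstacle is the careful identification of the left side of Definition \ref{mbia}(2) with $x\odot\nabla y$ and of the right side with $\nabla x\odot\nabla y$. This follows directly from the definition of $\odot$ in Theorem \ref{bia to sba} with $z=c_0$, together with the identification of $\odot$ as the meet for $\preceq$. Once that is in place, both parts reduce to the standard monotonicity argument for Halmos quantifiers.
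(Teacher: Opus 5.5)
Your proof is correct and follows essentially the same route as the paper. Part (1) rewrites $x\cdot\nabla y=c_1$ as $(x\cdot(\nabla y\cdot c_0))\cdot c_0=x$, applies $\nabla$ together with Definition~\ref{mbia}(2), and reads off $\nabla x\cdot\nabla y=c_1$; part (2) chains $x\preceq y\preceq\nabla y$ and then invokes part (1). The only difference is that you apply Definition~\ref{mbia}(2) directly, whereas the paper also invokes idempotence of $\nabla$ (which it calls an ``involution''), a step that is not needed.
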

\begin{proof}
    For condition 1, assume that $x\cdot\nabla y=c_1$ so that by Theorem \ref{bia to ba} we have $(x\cdot(\nabla y\cdot c_0))\cdot c_0=x$. Then by Definition \ref{mbia}(2) together with the fact that $\nabla$ is an involution yields: \[\nabla x=\nabla(x\cdot(\nabla y\cdot c_0))\cdot c_0=(\nabla x\cdot(\nabla\nabla y\cdot c_0))\cdot c_0=(\nabla x\cdot(\nabla y\cdot c_0))\cdot c_0\] which by Theorem \ref{bia to ba} implies $\nabla x\cdot\nabla y=c_1$. For condition 2, assume that $x\cdot y=c_1$ and observe that we have $y\cdot\nabla y=c_1$ by Definition \ref{mbia}(1). Then $x\cdot\nabla y=c_1$ and hence by the first part of this this proof, we have $\nabla x\cdot\nabla y=c_1$.    
\end{proof}
\begin{proposition}\label{prop 3.9}
    Any monadic implication algebra satisfies $\nabla(\nabla x\cdot c_0)=\nabla x\cdot c_0$. 
\end{proposition}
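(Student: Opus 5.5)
We want $\nabla(\nabla x\cdot c_0)=\nabla x\cdot c_0$; in Boolean terms this says the complement of a closed element is closed. The inequality $\nabla x\cdot c_0\preceq\nabla(\nabla x\cdot c_0)$ is just Definition \ref{mbia}(1) applied to the element $\nabla x\cdot c_0$. So the work is the reverse inequality $\nabla(\nabla x\cdot c_0)\preceq \nabla x\cdot c_0$, after which antisymmetry of $\preceq$ (Lemma \ref{bia to poset}) finishes the proof.

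For the reverse inequality we use Definition \ref{mbia}(2) with $y=x$, written in Boolean notation via Theorems \ref{bia to ba} and \ref{theorem 2.12}: $u\cdot c_0=-u$ and $(u\cdot(v\cdot c_0))\cdot c_0=u\odot v$. Axiom (2) then reads $\nabla(u\odot\nabla y)=\nabla u\odot\nabla y$.

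1. Set $u=\nabla x\cdot c_0=-\nabla x$ and $y=x$. The left side becomes $\nabla(-\nabla x\odot\nabla x)$.
2. In the Boolean algebra of Theorem \ref{bia to ba}, $-\nabla x\odot\nabla x=c_0$. Hence the left side is $\nabla c_0=c_0$ by Definition \ref{mbia}(1).
3. The right side is $\nabla(-\nabla x)\odot\nabla x$, so $\nabla(-\nabla x)\odot\nabla x=c_0$.
4. In any Boolean algebra, $a\odot b=c_0$ implies $a\preceq -b$. Therefore $\nabla(\nabla x\cdot c_0)\preceq\nabla x\cdot c_0$.

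Translated back into the implication language, step 4 says $\nabla(\nabla x\cdot c_0)\cdot(\nabla x\cdot c_0)=c_1$.

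The main obstacle is bookkeeping. One must check that the expression $(u\cdot(\nabla y\cdot c_0))\cdot c_0$ appearing in axiom (2) is literally $u\odot\nabla y$ with $z=c_0$ in Theorem \ref{bia to sba}. One must also check that the Boolean identities $a\odot -a=c_0$ and $(a\odot b=c_0\Rightarrow a\preceq -b)$ are legitimately available. The paper established the Boolean structure on $\langle A;\oplus,\odot,-,c_0,c_1\rangle$ with order $\preceq$, so I would cite Theorem \ref{bia to ba} for both.

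If a purely equational route is preferred, one can instead show $\nabla u\cdot(\nabla x\cdot c_0)=c_1$ by rewriting it as $(\nabla u\odot\nabla x)\cdot c_0$ using the exchange axiom. That reduces it to $c_0\cdot c_0=c_1$, which holds by Proposition \ref{constant1}.
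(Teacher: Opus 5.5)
Your proof is correct and is essentially the paper's. Both arguments get $\nabla x\cdot c_0\preceq\nabla(\nabla x\cdot c_0)$ from Definition~\ref{mbia}(1). Both get the reverse inequality by instantiating Definition~\ref{mbia}(2) with $u=\nabla x\cdot c_0$ and $y=x$: the left side collapses to $\nabla c_0=c_0$, and the resulting equation $(\nabla(\nabla x\cdot c_0)\cdot(\nabla x\cdot c_0))\cdot c_0=c_0$ forces $\nabla(\nabla x\cdot c_0)\cdot(\nabla x\cdot c_0)=c_1$. You phrase this last step through the Boolean algebra of Theorem~\ref{bia to ba}, while the paper computes it directly.

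One minor slip in your optional equational aside: rewriting $\nabla u\cdot(\nabla x\cdot c_0)$ as $(\nabla u\odot\nabla x)\cdot c_0$ does not use the exchange axiom. It uses the double-negation identity $(w\cdot c_0)\cdot c_0=(c_0\cdot w)\cdot w=c_1\cdot w=w$, which follows from quasi-commutativity, Definition~\ref{definition 2.7} and Proposition~\ref{constant1}.
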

\begin{proof}
Since $(\nabla x\cdot c_0)\cdot\nabla(\nabla x\cdot c_0)=c_1$ by Definition \ref{mbia}(1), it remains to show $\nabla(\nabla x\cdot c_0)\cdot(\nabla x\cdot c_0)=1$. Two applications of Proposition \ref{constant1} combined with Definition Definition \ref{mbia}(1) yields the following:   
\begin{equation}
    ((\nabla x\cdot c_0)\cdot(\nabla x\cdot c_0))\cdot c_0=c_1\cdot c_0=c_0=\nabla(((\nabla x\cdot c_0)\cdot(\nabla x\cdot c_0))\cdot c_0)=(\nabla(\nabla x\cdot c_0)\cdot(\nabla x\cdot c_0))\cdot c_0.
\end{equation}
Equation (2) then implies that $\nabla(\nabla x\cdot c_0)\cdot(\nabla x\cdot c_0))=c_1$, as desired. 
\end{proof}

\begin{proposition}\label{prop 3.10}
   If $A$ is a monadic implication algebra, $x,y\in\text{ran}(\nabla)$ implies $(x\cdot(y\cdot c_0))\cdot c_0,\hspace{.1cm} x\cdot c_0\in\text{ran}(\nabla)$.  
\end{proposition}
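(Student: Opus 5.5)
By Proposition \ref{prop 3.7}, it suffices to show that each of the two elements is a fixed point of $\nabla$. So I take $x,y\in\text{ran}(\nabla)$ and use $\nabla x=x$ and $\nabla y=y$. The key observation is that Definition \ref{mbia}(2) is exactly the statement that $\nabla$ commutes with the Boolean meet $x\odot\nabla y$. Here $\odot$ is the meet of the Boolean algebra of Theorem \ref{bia to ba} with $z=c_0$, namely $x\odot w=(x\cdot(w\cdot c_0))\cdot c_0$.

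\textbf{Meet.} Since $y=\nabla y$, I rewrite $(x\cdot(y\cdot c_0))\cdot c_0$ as $(x\cdot(\nabla y\cdot c_0))\cdot c_0$. Applying $\nabla$ and Definition \ref{mbia}(2) gives
\[\nabla((x\cdot(\nabla y\cdot c_0))\cdot c_0)=(\nabla x\cdot(\nabla y\cdot c_0))\cdot c_0.\]
Substituting $\nabla x=x$ and $\nabla y=y$ on the right returns $(x\cdot(y\cdot c_0))\cdot c_0$. This element is therefore fixed by $\nabla$ and lies in $\text{ran}(\nabla)$.

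\textbf{Complement.} Since $x=\nabla x$, we have $x\cdot c_0=\nabla x\cdot c_0$. Proposition \ref{prop 3.9} gives $\nabla(\nabla x\cdot c_0)=\nabla x\cdot c_0$, so $\nabla(x\cdot c_0)=x\cdot c_0$, and again Proposition \ref{prop 3.7} finishes the argument.

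\textbf{Main obstacle.} I expect no serious difficulty. The only care needed is to substitute $\nabla y$ for $y$ before applying Definition \ref{mbia}(2), since that axiom only applies when the second argument has the form $\nabla y$. Nothing beyond the earlier propositions is required.
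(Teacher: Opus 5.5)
Your proof is correct and follows the paper's argument essentially verbatim. For the meet you rewrite $y$ as $\nabla y$ and apply Definition~\ref{mbia}(2), and for the complement you use Proposition~\ref{prop 3.9}; the only cosmetic difference is that you conclude via fixed points and the trivial inclusion of Proposition~\ref{prop 3.7}, whereas the paper directly writes each element as $\nabla$ of something.
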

\begin{proof}
    Suppose that $x,y\in\text{ran}(\nabla)$ so that $x=\nabla x$ and $y=\nabla y$ by Proposition \ref{prop 3.7}. Definition \ref{mbia}(2) gives:
    \[(x\cdot(y\cdot c_0))\cdot c_0=(\nabla x\cdot(\nabla y\cdot c_0))\cdot c_0=\nabla((x\cdot(\nabla y\cdot c_0))\cdot c_0)\] with $\nabla((x\cdot(\nabla y\cdot c_0))\cdot c_0)\in\text{ran}(\nabla)$. Moreover, Proposition \ref{prop 3.9} yields $x\cdot c_0=\nabla x\cdot c_0=\nabla(\nabla x\cdot c_0)\in\text{ran}(\nabla)$.  
\end{proof}
\begin{proposition}\label{preservation of joins}
    Any monadic implication algebra satisfies $\nabla((x\cdot y)\cdot y)=((\nabla x\cdot\nabla y)\cdot\nabla y)$.  
\end{proposition}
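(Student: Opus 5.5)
Since $(x\cdot y)\cdot y=x\oplus y$ is the join in the Boolean algebra of Theorem \ref{bia to ba}, the statement says that $\nabla$ preserves binary joins. I will prove it by antisymmetry of $\preceq$, i.e.\ by showing $\nabla(x\oplus y)\preceq\nabla x\oplus\nabla y$ and $\nabla x\oplus\nabla y\preceq\nabla(x\oplus y)$, working throughout in the Boolean algebra $\mathfrak{A}_+$. By Theorem \ref{theorem 2.12} we have $x\cdot y=-x\oplus y$, with $-x=x\cdot c_0$ and $\odot$ the meet. Under this translation Definition \ref{mbia}(2) reads $\nabla(x\odot-\nabla y)=\nabla x\odot-\nabla y$.

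For the inequality $\nabla x\oplus\nabla y\preceq\nabla(x\oplus y)$, I use monotonicity. We have $x\preceq x\oplus y$ and $y\preceq x\oplus y$ in the join-semilattice of Lemma \ref{bia to poset}. Proposition \ref{prop 3.8}(2) then gives $\nabla x\preceq\nabla(x\oplus y)$ and $\nabla y\preceq\nabla(x\oplus y)$. Since $\oplus$ is the least upper bound, $\nabla x\oplus\nabla y\preceq\nabla(x\oplus y)$.

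For the inequality $\nabla(x\oplus y)\preceq\nabla x\oplus\nabla y$, which I expect to be the main step, I use the fact that $\text{ran}(\nabla)$ is closed under the Boolean operations. By Proposition \ref{prop 3.10} it is closed under $\odot$ and $-$, hence under $\oplus$ by De Morgan in $\mathfrak{A}_+$. So $z:=\nabla x\oplus\nabla y$ lies in $\text{ran}(\nabla)$, and Proposition \ref{prop 3.7} gives $\nabla z=z$. From $x\preceq\nabla x\preceq z$ and $y\preceq\nabla y\preceq z$ (Definition \ref{mbia}(1)) we get $x\oplus y\preceq z$, i.e.\ $(x\oplus y)\cdot z=c_1$. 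Rewriting this as $(x\oplus y)\cdot\nabla z=c_1$, Proposition \ref{prop 3.8}(1) yields $\nabla(x\oplus y)\cdot\nabla z=c_1$, that is, $\nabla(x\oplus y)\preceq z$.

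The only delicate point is checking that the De Morgan expression for $\nabla x\oplus\nabla y$ uses only the operations of Proposition \ref{prop 3.10}. The relevant identity is $a\oplus b=-(-a\odot-b)$ in $\mathfrak{A}_+$, which holds by Theorem \ref{bia to sba}. Applying closure under $-$ to $\nabla x$ and $\nabla y$, then closure under $\odot$, then closure under $-$ once more, shows that $\nabla x\oplus\nabla y\in\text{ran}(\nabla)$. Combining the two inequalities by antisymmetry gives $\nabla((x\cdot y)\cdot y)=(\nabla x\cdot\nabla y)\cdot\nabla y$.
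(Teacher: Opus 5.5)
Your argument is correct and is essentially the paper's proof. You prove $\nabla x\oplus\nabla y\preceq\nabla(x\oplus y)$ via Proposition \ref{prop 3.8}(2) and the least-upper-bound property. For the reverse, you use closure of $\text{ran}(\nabla)$ under the Boolean operations (Proposition \ref{prop 3.10} with De Morgan) to get $z=\nabla z$, then apply monotonicity to $x\oplus y\preceq z$. The only difference is cosmetic: you rewrite $z$ as $\nabla z$ and use Proposition \ref{prop 3.8}(1), whereas the paper uses Proposition \ref{prop 3.8}(2) and rewrites afterwards.

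There is one slip, but your proof never uses it: since $(x\cdot(\nabla y\cdot c_0))\cdot c_0=-(-x\oplus -\nabla y)=x\odot\nabla y$, Definition \ref{mbia}(2) reads $\nabla(x\odot\nabla y)=\nabla x\odot\nabla y$, not $\nabla(x\odot -\nabla y)=\nabla x\odot -\nabla y$.
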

\begin{proof}
    By Lemma \ref{bia to poset}, we have $x\cdot((x\cdot y)\cdot y)=y\cdot((x\cdot y)\cdot y)=c_1$ and hence by Proposition \ref{prop 3.8}(2), we have: \[\nabla x\cdot\nabla((x\cdot y)\cdot y)=\nabla y\cdot\nabla((x\cdot y)\cdot y)=c_1\] so $((\nabla x\cdot\nabla y)\cdot\nabla y)\cdot\nabla((x\cdot y)\cdot y)=c_1$. Now observe that $\nabla x,\nabla y\in\text{ran}(A)$ so by Theorem \ref{bia to ba}, Proposition \ref{prop 3.10} along with the De Morgan identities, $(\nabla x\cdot\nabla y)\cdot\nabla y\in\text{ran}(\nabla)$ and hence by Proposition \ref{prop 3.7} we have:
    \begin{equation}
        (\nabla x\cdot\nabla y)\cdot\nabla y=\nabla((\nabla x\cdot\nabla y)\cdot\nabla y).
    \end{equation}
     Then since $x\cdot((\nabla x\cdot\nabla y)\cdot\nabla y)=y\cdot((\nabla x\cdot\nabla y)\cdot\nabla y)=c_1$ by Definition \ref{mbia}(1), we have $((x\cdot y)\cdot y)\cdot((\nabla x\cdot\nabla y)\cdot\nabla y)=c_1$ which by Proposition \ref{prop 3.8}(2) yields $\nabla((x\cdot y)\cdot y)\cdot\nabla((\nabla x\cdot\nabla y)\cdot\nabla y)=c_1$. Then by Equation (3) we obtain $\nabla((x\cdot y)\cdot y)\cdot(\nabla x\cdot\nabla y)\cdot\nabla y=c_1$ and hence conclude $\nabla((x\cdot y)\cdot y)=((\nabla x\cdot\nabla y)\cdot\nabla y)$.  
\end{proof}

\subsection{Isomorphism between $\mathbf{MIA}$ and $\mathbf{MBA}$} In this subsection, we demonstrate that the category $\mathbf{MIA}$ of monadic implication algebras is isomorphic to the category $\mathbf{MBA}$ of monadic Boolean algebras. 

\begin{theorem}\label{mba to mia}
    Every monadic Boolean algebra can be converted into a monadic implication algebra. 
\end{theorem}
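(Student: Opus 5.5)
Given a monadic Boolean algebra $\langle B;\wedge,\vee,',0,1,\exists\rangle$, I take the bounded implication algebra $\langle B;\pi_c,0\rangle$ from Theorem \ref{ba to bia} and set $\nabla:=\exists$. It then remains to check the two conditions of Definition \ref{mbia}. The strategy is to translate every implication-algebra term into Boolean operations, using $c_0=0$, $c_1=1$ and $\pi_c(x,y)=x'\vee y$.

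First I record the translations I will need. From the proof of Theorem \ref{ba to bia}, $c_1=\pi_c(x,x)=x'\vee x=1$ and $c_0=0$. Also $\pi_c(x,0)=x'$, and
\[
\pi_c(\pi_c(x,\pi_c(y,0)),0)=(x'\vee y')'=x\wedge y,
\]
exactly as computed in Theorem \ref{theorem 2.10}. Moreover $\pi_c(x,y)=1$ if and only if $x\le y$.

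For condition (1) of Definition \ref{mbia}: $\nabla c_0=\exists 0=0=c_0$ holds by Definition \ref{monadic ortholattice}(2a). Also $\pi_c(x,\exists x)=x'\vee\exists x=1$, because $x\le\exists x$.

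For condition (2): using the translation above with $\exists y$ in place of $y$, the left side becomes $\exists(x\wedge\exists y)$ and the right side becomes $\exists x\wedge\exists y$. These are equal by axiom (2b) of Definition \ref{monadic ortholattice}.

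The only point needing care is the bookkeeping in condition (2): the nested term $(x\cdot(\nabla y\cdot c_0))\cdot c_0$ must be correctly identified with the meet $x\wedge\nabla y$. Once that identification is made via the computation in Theorem \ref{theorem 2.10}, both conditions reduce directly to the quantifier axioms, and the theorem follows.
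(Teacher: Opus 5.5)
Your proposal is correct and follows the paper's proof essentially verbatim. Like the paper, you take $\langle B;\pi_c,0\rangle$ from Theorem \ref{ba to bia}, check $\exists 0=0$ and $x'\vee\exists x=1$ for condition (1), and identify $(x\cdot(\nabla y\cdot c_0))\cdot c_0$ with $x\wedge\exists y$ so that condition (2) reduces to axiom (2b); your bookkeeping is slightly cleaner than the paper's, whose displayed target equation writes $\pi_c(y,0)$ on the left where $\pi_c(\exists y,0)$ is meant.
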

\begin{proof}
    By Theorem \ref{ba to bia}, every Boolean algebra $B$ gives rise to a bounded implication algebra $\langle B;\pi_c,0\rangle$ and hence it suffices to demonstrate that the quantifier $\exists\colon B\to B$ satisfies conditions 1 and 2 of Definition \ref{mbia}. For condition 1, observe that $\exists 0=0$ is trivial and that since $\exists$ is increasing and $\vee$ is monotone, we have $\pi_c(x,\exists x)=x'\vee\exists x=1$. For condition 2, if suffices to demonstrate the following: 
    \begin{equation}
\exists\pi_c(\pi_c(x,\pi_c(y,0)),0)=\pi_c(\pi_c(\exists x,\pi_c(\exists y,0)),0).
    \end{equation}
    The first part of calculation proceeds by a repeated application of the definition of $\pi_c$ along with the fact that $\pi_c(x,0)=x'$ for all $x\in B$:
    \begin{equation}
        \exists(\pi_c(\pi_c(x,\pi_c(\exists y,0)),0)=\exists(\pi_c(\pi_c(x,(\exists y)')),0)=\exists(\pi_c(x,(\exists y)')')=\exists(x'\vee(\exists y)')'
    \end{equation}
    De Morgan's identities, the fact that $'$ is an involution, Definition \ref{monadic ortholattice}(2(b)) gives:
    \begin{equation}
        \exists(x'\vee(\exists y)')'=\exists(x''\wedge(\exists y)'')=\exists(x\wedge\exists y)=\exists x\wedge \exists y
    \end{equation}
Applying De Morgan's identities and the fact that $0$ is a unit then yields: 
\begin{equation}
     \exists x\wedge \exists y=((\exists x)'\vee (\exists y)')'=((\exists x)'\vee((\exists y)'\vee 0))'=((\exists x)'\vee((\exists y)'\vee 0))'\vee 0
\end{equation}
Applying the definition of $\pi_c$ then yields the following: 
\begin{equation}
    ((\exists x)'\vee((\exists y)'\vee 0))'\vee 0=\pi_c(\pi_c(\exists x,\pi_c(\exists y,0)),0)
\end{equation}
Equations (5)-(8) then give Equation (4), as desired.    
\end{proof}
\begin{theorem}\label{theorem 3.13}
    Every monadic implication algebra can be converted into a monadic Boolean algebra. 
\end{theorem}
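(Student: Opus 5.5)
By Theorem \ref{bia to ba}, the bounded implication algebra reduct of a monadic implication algebra $\langle A;\cdot,c_0,\nabla\rangle$ already yields a Boolean algebra $\mathfrak{A}_+=\langle A;\odot,\oplus,-,c_0,c_1\rangle$. Here $x\odot y=(x\cdot(y\cdot c_0))\cdot c_0$, $x\oplus y=(x\cdot y)\cdot y$, $-x=x\cdot c_0$, and the order is $\preceq$. So the plan is to set $\exists:=\nabla$ and check conditions 2(a) and 2(b) of Definition \ref{monadic ortholattice} for $\langle A;\odot,\oplus,-,c_0,c_1,\nabla\rangle$. Throughout I will use Theorem \ref{theorem 2.12}, which identifies $x\cdot y$ with $\pi_c(x,y)=-x\oplus y$ in $\mathfrak{A}_+$. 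This lets me move freely between implication-algebra terms and Boolean terms.

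Condition 2(a) should be immediate. First, $\nabla c_0=c_0$ is part 1 of Definition \ref{mbia}(1), and $c_0$ is the bottom of $\mathfrak{A}_+$. Second, $x\cdot\nabla x=c_1$ says exactly $x\preceq\nabla x$, which by Lemma \ref{bia to poset} is $x\le\nabla x$ in $\mathfrak{A}_+$. The poset orders agree because $x\le y$ in $\mathfrak{A}_+$ iff $x\oplus y=y$ iff $(x\cdot y)\cdot y=y$ iff $x\cdot y=c_1$. This last equivalence holds by standard implication-algebra facts: one direction uses $c_1\cdot y=y$ from Proposition \ref{constant1}, and the other uses the semilattice structure of Lemma \ref{bia to poset}.

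For condition 2(b), the identity $\nabla(x\odot\nabla y)=\nabla x\odot\nabla y$ is, after unfolding $\odot$, literally Definition \ref{mbia}(2). The left side is $\nabla((x\cdot(\nabla y\cdot c_0))\cdot c_0)$. The right side is $(\nabla x\cdot(\nabla\nabla y\cdot c_0))\cdot c_0$, and since $\nabla\nabla y=\nabla y$ by Proposition \ref{idempotent}, this equals $(\nabla x\cdot(\nabla y\cdot c_0))\cdot c_0$, which is exactly the right side of Definition \ref{mbia}(2). Hence $\nabla$ is a quantifier on $\mathfrak{A}_+$, and $\langle A;\odot,\oplus,-,c_0,c_1,\nabla\rangle$ is a monadic Boolean algebra.

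I expect the only delicate point to be the bookkeeping, not any real difficulty: confirming that the meet in the Boolean algebra of Theorem \ref{bia to sba} (with $z=c_0$) is exactly $\odot$, and that its order is $\preceq$. Both are given by Theorem \ref{bia to sba} and Lemma \ref{bia to poset}. Propositions \ref{prop 3.8}–\ref{preservation of joins} are not needed for this direction. Those facts (monotonicity, closure of $\mathrm{ran}(\nabla)$ under the Boolean operations, preservation of joins) could be cited afterwards as a sanity check that $\mathrm{ran}(\nabla)$ is a Boolean subalgebra, as expected of a quantifier.
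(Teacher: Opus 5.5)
Your proposal is correct and matches the paper's proof essentially step for step. You take the Boolean algebra from Theorem \ref{bia to ba}, read condition 2(a) off Definition \ref{mbia}(1), and get 2(b) by unfolding $\odot$ into Definition \ref{mbia}(2); the detour through $\nabla\nabla y$ and Proposition \ref{idempotent} is harmless but unnecessary, since $\nabla x\odot\nabla y$ unfolds directly to $(\nabla x\cdot(\nabla y\cdot c_0))\cdot c_0$.
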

\begin{proof}
 We have $\nabla c_0=c_0$ as well as $x\cdot\nabla x=c_1$ and hence $x\preceq\nabla x$ and thus Definition \ref{monadic ortholattice}(2(a)) is satisfied. To see that condition 2(b) is satisfied, note that: \[\nabla(x\odot\nabla y)=\nabla((x\cdot(\nabla y\cdot c_0))\cdot c_0)=(\nabla x\cdot(\nabla y\cdot c_0))\cdot c_0=\nabla x\odot\nabla y,\] where the second equality follows by a direct application of Definition \ref{mbia}(2). This completes the proof.     
\end{proof}
\begin{corollary}
    Let $\mathfrak{A}=\langle A;\cdot,c_0,\nabla\rangle$ be a monadic implication algebra and let $\mathfrak{A}_+=\langle A;\odot,\oplus,-,c_0,c_1,\nabla\rangle$ be the monadic Boolean algebra induced by $\mathfrak{A}$ via Theorem \ref{theorem 3.13}. Then $\text{ran}(\nabla)$ forms a Boolean subalgebra of $\mathfrak{A}_+$. 
\end{corollary}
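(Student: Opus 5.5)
We need to show that $\text{ran}(\nabla)$ contains $c_0$ and $c_1$ and is closed under $\odot$, $\oplus$ and $-$ in $\mathfrak{A}_+$. Recall that in $\mathfrak{A}_+$ (Theorem \ref{bia to ba}, i.e.\ Theorem \ref{bia to sba} with $z=c_0$) we have $x\odot y=(x\cdot(y\cdot c_0))\cdot c_0$, $-x=x\cdot c_0$ and $x\oplus y=(x\cdot y)\cdot y$. The plan is to check each operation directly, using the characterization $\text{ran}(\nabla)=\{x\in A:\nabla x=x\}$ from Proposition \ref{prop 3.7} throughout.

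\textbf{Constants.} By Definition \ref{mbia}(1) we have $\nabla c_0=c_0$, and by Proposition \ref{idempotent} we have $\nabla c_1=c_1$. Hence both $c_0$ and $c_1$ are fixed points of $\nabla$, and so lie in $\text{ran}(\nabla)$ by Proposition \ref{prop 3.7}.

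\textbf{Meet and complement.} Let $x,y\in\text{ran}(\nabla)$. Proposition \ref{prop 3.10} gives directly that $(x\cdot(y\cdot c_0))\cdot c_0=x\odot y$ and $x\cdot c_0=-x$ lie in $\text{ran}(\nabla)$. So closure under $\odot$ and $-$ is immediate.

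\textbf{Join.} For $\oplus$ there are two routes. The first is purely Boolean: in $\mathfrak{A}_+$ De Morgan gives $x\oplus y=-(-x\odot -y)$, and closure under $\odot$ and $-$ then yields closure under $\oplus$. The second uses Proposition \ref{preservation of joins}: since $\nabla x=x$ and $\nabla y=y$,
\[\nabla(x\oplus y)=\nabla((x\cdot y)\cdot y)=(\nabla x\cdot\nabla y)\cdot\nabla y=(x\cdot y)\cdot y=x\oplus y,\]
so $x\oplus y\in\text{ran}(\nabla)$ by Proposition \ref{prop 3.7}. I would present the second route, since it avoids any dependence on how carefully the De Morgan step is justified.

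There is no genuine obstacle here; the only point requiring care is making sure the operations of $\mathfrak{A}_+$ are exactly the implication terms to which Propositions \ref{prop 3.10} and \ref{preservation of joins} apply. This holds by the construction in Theorem \ref{bia to sba} with $z=c_0$. With that confirmed, $\text{ran}(\nabla)$ is a subset containing $c_0,c_1$ and closed under $\odot,\oplus,-$, hence a Boolean subalgebra of $\mathfrak{A}_+$.
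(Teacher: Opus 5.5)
Your proof is correct and is essentially the paper's argument. Proposition \ref{prop 3.10} handles $\odot$ and $-$, the constants are fixed by $\nabla$, and $\oplus$ follows by De Morgan in the Boolean algebra $\mathfrak{A}_+$, which is your first route. One correction to your stated reason for preferring the route via Proposition \ref{preservation of joins}: it does not avoid De Morgan, since the paper's proof of that proposition obtains $(\nabla x\cdot\nabla y)\cdot\nabla y\in\text{ran}(\nabla)$ precisely from Proposition \ref{prop 3.10} together with De Morgan. In any case De Morgan holds outright in the Boolean algebra $\mathfrak{A}_+$, so neither route needs extra care.
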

\begin{proof}
    By Proposition \ref{prop 3.10}, we have $(x\cdot (y\cdot c_0))\cdot c_0\in\text{ran}(\nabla)$ and $x\cdot c_0\in\text{ran}(\nabla)$ whenever $x,y\in\text{ran}(\nabla)$. Moreover, since $c_1=\nabla c_1$, it follows that $\text{ran}(\nabla)$ is closed under meets, Boolean complements, and the top universal bound. Applying De Morgan's identities to the induced Boolean algebra $\mathfrak{A}_+$ from $\mathfrak{A}$ then implies that $\text{ran}(\nabla)$ is closed under the relevant Boolean algebra operations.  
\end{proof}
The results collected thus far yield the following theorems. 
\begin{theorem}\label{theorem 3.15}
    Let $\mathfrak{B}=\langle A;\wedge,\vee,',0,1,\exists\rangle$ be a monadic Boolean algebra, let $\mathfrak{B}^+=\langle A;\pi_c,0,\exists\rangle$ be the monadic implication algebra obtained from $\mathfrak{B}$ via Theorem \ref{mba to mia}, and let $(\mathfrak{B}^{+})_+=\langle A;\odot,\oplus,-,0,1,\exists\rangle$ be the monadic Boolean algebra obtained from $\mathfrak{B}^+$ via Theorem \ref{theorem 3.13}. Then $\mathfrak{B}=(\mathfrak{B}^+)_+$. 
\end{theorem}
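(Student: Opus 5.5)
The plan is to reduce the statement to Theorem \ref{theorem 2.10} and then check the quantifier separately. The Boolean reduct of $(\mathfrak{B}^+)_+$ is obtained from the Boolean reduct of $\mathfrak{B}$ by exactly the constructions of Theorems \ref{ba to bia} and \ref{bia to ba}. This is because Theorems \ref{mba to mia} and \ref{theorem 3.13} only append the unary operator to those constructions and leave the underlying implication and Boolean structure untouched. Theorem \ref{theorem 2.10} therefore gives directly that the two Boolean reducts coincide on the common carrier $A$:
\[
x\wedge y=x\odot y,\qquad x\vee y=x\oplus y,\qquad x'=-x=\pi_c(x,0).
\]
It also gives that the bounds agree, namely $0=c_0$ and $1=c_1=\pi_c(x,x)$. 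The order relations agree as well, since $x\leq y$ iff $\pi_c(x,y)=1$ iff $x\preceq y$.

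Next I handle the quantifier. In $\mathfrak{B}^+$ the operator $\nabla$ is by definition the original $\exists$; Theorem \ref{mba to mia} verifies that $\exists$ satisfies Definition \ref{mbia} and does not alter it. In $(\mathfrak{B}^+)_+$ the quantifier is again this same $\nabla=\exists$, because Theorem \ref{theorem 3.13} only checks that $\nabla$ meets Definition \ref{monadic ortholattice}(2) with respect to $\odot$ and $\preceq$. Hence the unary operations of $\mathfrak{B}$ and $(\mathfrak{B}^+)_+$ are literally the same function $A\to A$.

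Equality of two algebras with the same carrier and the same signature amounts to equality of each fundamental operation. The Boolean operations and constants agree by the first step, and the quantifier agrees by the second, so $\mathfrak{B}=(\mathfrak{B}^+)_+$.

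I expect no real obstacle here. The only point to be careful about is the bookkeeping: the transfer constructions must not modify $\exists$, and Theorem \ref{theorem 2.10} must apply verbatim to the reducts. The first holds because neither theorem modifies the operator, and the second holds because forgetting $\exists$ commutes with both constructions.
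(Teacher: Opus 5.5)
Your proposal is correct and follows essentially the same approach as the paper. The paper gives no separate proof and says only that the result follows from the results already established, namely Theorem~\ref{theorem 2.10} for the Boolean reducts together with the fact that the constructions of Theorems~\ref{mba to mia} and~\ref{theorem 3.13} carry $\exists$ over unchanged; your argument spells out exactly this reduction.
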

\begin{theorem}\label{theorem 3.16}
    Let $\mathfrak{A}=\langle A;\cdot,c_0,\nabla\rangle$ be a monadic implication algebra, let $\mathfrak{A}_+=\langle A;\odot,\oplus,-,c_0,c_1,\nabla\rangle$ be the monadic Boolean algebra obtained from $\mathfrak{A}$ via Theorem \ref{theorem 3.13}, and let $(\mathfrak{A}_+)^+=\langle A;\pi_c,c_0,\nabla\rangle$ be the monadic implication algebra obtained from $\mathfrak{A}_+$ via Theorem \ref{mba to mia}. Then $\mathfrak{A}=(\mathfrak{A}_+)^+$. 
\end{theorem}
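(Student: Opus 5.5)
The proof is essentially a bookkeeping argument that reduces to Theorem \ref{theorem 2.12}.

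First, compare carrier sets and the bounded implication algebra reducts. Passing from $\mathfrak{A}$ to $\mathfrak{A}_+$ via Theorem \ref{theorem 3.13} does not touch the carrier set $A$, the constant $c_0$, or the operator $\nabla$. It only replaces $\cdot$ by the Boolean operations $\odot,\oplus,-$, built as in Theorem \ref{bia to ba}, that is, Theorem \ref{bia to sba} with $z=c_0$. Passing back to $(\mathfrak{A}_+)^+$ via Theorem \ref{mba to mia} likewise keeps the carrier, $c_0$ and $\nabla$, and only defines $\pi_c(x,y):=-x\oplus y$. The $\nabla$-free reduct of $(\mathfrak{A}_+)^+$ is therefore exactly the bounded implication algebra $((\langle A;\cdot,c_0\rangle)_+)^+$ of Theorem \ref{theorem 2.12}.

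Second, invoke Theorem \ref{theorem 2.12}, which gives $x\cdot y=\pi_c(x,y)$ for all $x,y\in A$. I would spell out its one line: since $c_0\cdot y=c_1$, Lemma \ref{lemma 1} yields $x\cdot y=((x\cdot c_0)\cdot y)\cdot y=(-x)\oplus y=\pi_c(x,y)$. Here $-x=x\cdot c_0$ and $u\oplus v=(u\cdot v)\cdot v$ are the definitions from Theorem \ref{bia to sba} and Lemma \ref{bia to poset}.

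Third, match the quantifier. The operator carried by $(\mathfrak{A}_+)^+$ is literally the same function $\nabla$, so the two signatures $\langle\cdot,c_0,\nabla\rangle$ and $\langle\pi_c,c_0,\nabla\rangle$ agree on every symbol, and $\mathfrak{A}=(\mathfrak{A}_+)^+$. Strictly, the monadic implication algebra axioms of Definition \ref{mbia} for $(\mathfrak{A}_+)^+$ need no separate check, because they hold for $\mathfrak{A}$ and the operations coincide. Theorem \ref{mba to mia} guarantees them in any case.

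The only point requiring care is that the constants do not drift. The bottom element of $\mathfrak{A}_+$ must be $c_0$ (it is, by construction with $z=c_0$), and the top element must be $c_1$, which $\pi_c$ uses implicitly through $-$ and $\oplus$. Once this is confirmed, the argument is just Theorem \ref{theorem 2.12} with $\nabla$ carried along unchanged.
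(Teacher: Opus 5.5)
Your proposal is correct and is essentially the paper's argument. The paper gives no separate proof of this theorem and treats it as an immediate consequence of Theorem~\ref{theorem 2.12}, namely the computation $x\cdot y=((x\cdot c_0)\cdot y)\cdot y=(-x)\oplus y=\pi_c(x,y)$ via Lemma~\ref{lemma 1}, together with the observation that the carrier, $c_0$ and $\nabla$ pass unchanged through both constructions.
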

\begin{definition}
    If $A$ and $B$ are monadic implication algebras, then a function $h\colon A\to B$ is a \emph{homomorphism} provided $h$ is a homomorphism from the bounded implication algebra reduct $\langle A;\cdot,c_0\rangle$ of $A$ to the bounded implication algebra reduct $\langle B;\cdot,c_0\rangle$ of $B$ and satisfies $h(\nabla x)=\nabla h(x)$. 
\end{definition}

\begin{theorem}\label{theorem 3.18}
    $\mathbf{MIA}$ is isomorphic to $\mathbf{MBA}$. 
\end{theorem}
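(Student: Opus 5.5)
We will define two functors and show they are mutually inverse. Let $F\colon\mathbf{MBA}\to\mathbf{MIA}$ send a monadic Boolean algebra $\mathfrak{B}$ to $\mathfrak{B}^+$, as in Theorem \ref{mba to mia}, and act as the identity on underlying functions. Let $G\colon\mathbf{MIA}\to\mathbf{MBA}$ send $\mathfrak{A}$ to $\mathfrak{A}_+$, as in Theorem \ref{theorem 3.13}, and also act as the identity on morphisms. On objects, Theorem \ref{theorem 3.15} gives $GF(\mathfrak{B})=\mathfrak{B}$ and Theorem \ref{theorem 3.16} gives $FG(\mathfrak{A})=\mathfrak{A}$. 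Since both functors are the identity on underlying maps, functoriality (identities and composition) is automatic. The whole content therefore reduces to one check: a function $h\colon A\to B$ is a monadic Boolean algebra homomorphism $\mathfrak{B}_1\to\mathfrak{B}_2$ if and only if it is a monadic implication algebra homomorphism $\mathfrak{B}_1^+\to\mathfrak{B}_2^+$.

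For the Boolean parts of both directions we invoke Theorem \ref{theorem 2.15}. Suppose $h$ is a Boolean homomorphism. Then $h(\pi_c(x,y))=h(x)'\vee h(y)=\pi_c(h(x),h(y))$ and $h(0)=0$, so $h$ is a bounded implication homomorphism.

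Conversely, suppose $h$ preserves $\cdot$ and $c_0$. It preserves $c_1$ by the remark after the definition of homomorphism. It preserves complement because $-x=x\cdot c_0$, so $h(-x)=h(x)\cdot c_0=-h(x)$. It preserves join because $x\oplus y=(x\cdot y)\cdot y$. Meet follows from
\[x\odot y=(x\cdot(y\cdot c_0))\cdot c_0,\]
since this is a term in $\cdot$ and $c_0$. By Theorems \ref{theorem 2.10} and \ref{theorem 2.12}, these operations coincide with the original Boolean operations, so $h$ is a Boolean homomorphism.

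For the monadic part, the quantifier in $\mathfrak{B}^+$ is literally the same map $\exists$ renamed $\nabla$, and likewise in the other direction. Hence the condition $h(\exists x)=\exists h(x)$ is identical to $h(\nabla x)=\nabla h(x)$, and preservation of the extra operator transfers verbatim.

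The only step needing care is the converse direction of the Boolean part. There we must know that every Boolean operation of $\mathfrak{A}_+$ is a term in $\cdot$ and $c_0$, and that these terms agree with the original operations under the identifications of Theorems \ref{theorem 2.10} and \ref{theorem 2.12}. Once that is established, the isomorphism of categories follows, since $F$ and $G$ are inverse on both objects and arrows.
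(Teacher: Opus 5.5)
Your proposal is correct and follows the same route as the paper: the paper's proof just cites Theorem \ref{theorem 2.15} for the correspondence of homomorphisms and Theorems \ref{theorem 3.15} and \ref{theorem 3.16} for the object-level round trips. You spell out what the paper leaves implicit, namely that both functors are the identity on underlying maps, that $\odot$, $\oplus$, $-$ are terms in $\cdot$ and $c_0$ so the Boolean part of the hom-set bijection goes through, and that the condition $h(\nabla x)=\nabla h(x)$ is literally the same condition in both categories.
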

\begin{proof}
    Apply Theorem \ref{theorem 2.15}, Theorem \ref{theorem 3.15}, and Theorem \ref{theorem 3.16}. 
\end{proof}
\subsection{Isomorphism between $\mathbf{CIA}$ and $\mathbf{CBA}$}
We now introduce the cylindric expansions of the monadic implication algebras and demonstrate that $\mathbf{CIA}$ is isomorphic to $\mathbf{CBA}$.  
\begin{definition}\label{cia}
    An \emph{I-dimensional cylindric implication algebra} is an algebraic structure with the following signature $\langle A;\cdot,c_0,(\nabla_i)_{i\in I}, (d_{i,k})_{i,k\in I}\rangle$ such that the following conditions are satisfied: 
    \begin{enumerate}
        \item $\langle A;\cdot,c_0,\nabla_i\rangle$ is a monadic implication algebra for each $i\in I$; 
        \item $(\nabla_i)_{i\in I}$ is a family of pairwise commuting operators, i.e., $\nabla _i\nabla_kx=\nabla_k\nabla_ix$; 
        \item $(d_{i,k})_{i,k\in I}$ is a family of constants satisfying: 
        \begin{enumerate}
            \item $d_{i,k}=d_{k,i}$ and $d_{i,i}=c_1$; 
            \item $i,l\not=k\Rightarrow\nabla_{k}\bigl[(d_{i,k}\cdot (d_{k,l}\cdot c_0))\cdot c_0\bigl]=d_{i,l}$; 
            \item $i\not=k\Rightarrow\biggl[\nabla_i\biggl((d_{i,k}\cdot(x\cdot c_0))\cdot c_0\biggl)\cdot\biggl(\biggl(\nabla_i\biggl(\bigl[d_{i,k}\cdot((x\cdot c_0)\cdot c_0)\bigl]\cdot c_0\biggl)\biggl)\cdot c_0\biggl)\biggl]\cdot c_0=c_0$.
        \end{enumerate}
    \end{enumerate}
    We call $\langle A;\cdot,c_0,(\nabla_i)_{i\in I}\rangle$ the \emph{diagonal-free $I$-dimensional cylindric implication algebra reduct} of $A$. 
\end{definition}
 
\begin{definition}
    Let $A$ be an $I$-dimensional cylindric implication algebra. Then define the following operation of substitution $\sigma^i_k\colon A\to A$ by $\sigma^i_k(x)=\nabla_i((d_{i,k}\cdot (x\cdot c_0))\cdot c_0)$.
\end{definition}

Proposition \ref{distributivity} and Proposition \ref{endomorphism for complements} will be exploited in the proof of Proposition \ref{endomorphism}. 
\begin{proposition}\label{distributivity}
    In any bounded implication algebra, we have the following: 
    \[\biggl[x\cdot\biggl(((y\cdot z)\cdot z)\cdot c_0\biggl)\biggl]\cdot c_0=\biggl[\biggl((x\cdot (y\cdot c_0))\cdot c_0\biggl)\cdot\biggl((x\cdot (z\cdot c_0))\cdot c_0\biggl)\biggl]\cdot(x\cdot (z\cdot c_0))\cdot c_0.\]
\end{proposition}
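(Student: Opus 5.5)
The plan is to move the whole computation into the Boolean algebra attached to $A$ and then recognise the identity as the distributive law. By Theorem \ref{bia to ba} and Theorem \ref{theorem 2.12}, every bounded implication algebra $\mathfrak{A}$ satisfies $\mathfrak{A}=(\mathfrak{A}_+)^+$. So I will work in the Boolean algebra $\mathfrak{A}_+=\langle A;\odot,\oplus,-,c_0,c_1\rangle$, where
\[x\cdot y=\pi_c(x,y)=-x\oplus y .\]

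The first step is to record three translations, each read off directly from the definitions in Theorem \ref{bia to sba} with $z=c_0$:
\[-x=x\cdot c_0,\qquad x\odot y=(x\cdot(y\cdot c_0))\cdot c_0,\qquad x\oplus y=(x\cdot y)\cdot y.\]
Theorem \ref{theorem 2.10} confirms these agree with the Boolean meet, join and complement.

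The second step is to rewrite each side of the claimed identity.
\begin{itemize}
\item The inner term $(y\cdot z)\cdot z$ is $y\oplus z$. The left-hand side then has the form $(x\cdot(w\cdot c_0))\cdot c_0$ with $w=y\oplus z$, so it equals $x\odot(y\oplus z)$.
\item On the right, $(x\cdot(y\cdot c_0))\cdot c_0=x\odot y$ and $(x\cdot(z\cdot c_0))\cdot c_0=x\odot z$.
\item I read the right-hand side as $\bigl[(x\odot y)\cdot(x\odot z)\bigr]\cdot(x\odot z)$, taking the trailing factor $(x\cdot(z\cdot c_0))\cdot c_0$ as a single unit. This expression has the shape $(a\cdot b)\cdot b$, so it is $(x\odot y)\oplus(x\odot z)$.
\end{itemize}

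With these rewrites the statement becomes $x\odot(y\oplus z)=(x\odot y)\oplus(x\odot z)$. This is the distributive law in the Boolean algebra $\mathfrak{A}_+$, so the identity holds.

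If a fully equational argument inside $\langle A;\cdot,c_0\rangle$ is preferred, each step above can be unwound through $\pi_c$. The relevant facts are $x\cdot y=-x\oplus y$, De Morgan's laws, and the identity $(a\cdot b)\cdot b=a\oplus b$ already verified in the proof of Theorem \ref{ba to bia}.

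The main obstacle is the parenthesisation of the right-hand side as printed. Its last factor $(x\cdot(z\cdot c_0))\cdot c_0$ must be grouped as one term for the identity to be the distributive law. If it were instead parsed left-associatively, say as $\bigl(\bigl[\cdots\bigr]\cdot(x\cdot(z\cdot c_0))\bigr)\cdot c_0$, the expression would compute a different Boolean polynomial. Under that reading the identity would generally fail. So I will first fix the intended grouping explicitly, and only then carry out the translation.
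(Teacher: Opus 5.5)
Your proposal is correct and follows the paper's proof, which likewise reads the two sides in the Boolean algebra $\mathfrak{A}_+$ of Theorem \ref{bia to ba} as $x\odot(y\oplus z)$ and $(x\odot y)\oplus(x\odot z)$, and then appeals to distributivity. Your decision to group the trailing factor $(x\cdot(z\cdot c_0))\cdot c_0$ as a single term is the reading the paper intends; the left-associative parse would in fact reduce to $x\odot z$. So that remark is a useful clarification, not a departure from the paper.
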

\begin{proof}
The result follows immediately by Theorem \ref{bia to ba}. Note that given our construction of meets and joins in the Boolean algebra induced by a bounded implication algebra, the above equation expresses the distributive identity $x\wedge(y\vee z)=(x\wedge y)\vee(x\wedge z)$ that is characteristic of Boolean algebras.  
\end{proof}
\begin{proposition}\label{endomorphism for complements}
Any $I$-dimensional cylindric implication algebra satisfies $\sigma^i_k(x\cdot c_0)=\sigma^i_k(x)\cdot c_0$.  
\end{proposition}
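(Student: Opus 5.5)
Throughout I take $i\neq k$. For $i=k$ we have $d_{i,i}=c_1$, so $\sigma^i_i=\nabla_i$, and the identity would say that $\nabla_i$ commutes with complement, which fails in general; so the statement must be read with the same hypothesis $i\neq k$ as Definition \ref{cia}(3)(c). The plan is to leave the implication signature entirely and argue inside the Boolean algebra $\mathfrak{A}_+$ obtained from the monadic reduct $\langle A;\cdot,c_0,\nabla_i\rangle$ via Theorem \ref{theorem 3.13}. By Theorems \ref{theorem 2.10}, \ref{theorem 2.12} and \ref{theorem 3.16}, the implication-algebra terms translate as follows:
\begin{itemize}
\item $x\cdot c_0$ is the complement $-x$;
\item $(x\cdot(y\cdot c_0))\cdot c_0$ is the meet $x\odot y$;
\item $(x\cdot y)\cdot y$ is the join $x\oplus y$.
\end{itemize}
Hence $\sigma^i_k(x)=\nabla_i(d_{i,k}\odot x)$, and the claim becomes
\[\nabla_i(d_{i,k}\odot -x)=-\nabla_i(d_{i,k}\odot x).\]
By uniqueness of complements in a Boolean algebra, it suffices to show two things: that $a:=\nabla_i(d_{i,k}\odot x)$ and $b:=\nabla_i(d_{i,k}\odot -x)$ have meet $c_0$, and that they have join $c_1$.

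\textbf{Meet.} After the translation above (and using $--x=x$ for the inner term), Definition \ref{cia}(3)(c) reads literally as
\[\nabla_i(d_{i,k}\odot x)\odot\nabla_i(d_{i,k}\odot -x)=c_0.\]
So $a\odot b=c_0$ is exactly this axiom. The only care needed is to check that the bracketing in (3)(c) really parses to this meet.

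\textbf{Join.} First, the quantifier $\nabla_i$ preserves binary joins by Proposition \ref{preservation of joins}. Combining this with distributivity (Proposition \ref{distributivity}) gives
\[a\oplus b=\nabla_i\bigl((d_{i,k}\odot x)\oplus(d_{i,k}\odot -x)\bigr)=\nabla_i\bigl(d_{i,k}\odot(x\oplus -x)\bigr)=\nabla_i d_{i,k}.\]
It therefore remains to prove $\nabla_i d_{i,k}=c_1$.

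I expect this last step to be the main obstacle, since it needs a nontrivial use of the diagonal axioms. I will apply Definition \ref{cia}(3)(b) with the roles of the indices chosen suitably. Take the bound index of (3)(b) to be $i$, and both free indices to be $k$; the side condition $k\neq i$ holds by assumption. This gives
\[\nabla_i(d_{k,i}\odot d_{i,k})=d_{k,k}=c_1.\]
By symmetry $d_{k,i}=d_{i,k}$ and idempotence of $\odot$, the left side is $\nabla_i d_{i,k}$, so $\nabla_i d_{i,k}=c_1$.

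Combining the two parts, $b$ is the complement of $a$ in $\mathfrak{A}_+$. Translating back to the implication signature, this is exactly $\sigma^i_k(x\cdot c_0)=\sigma^i_k(x)\cdot c_0$.
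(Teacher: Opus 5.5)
Your proof is correct and follows the same route as the paper. You show that $\sigma^i_k(x)$ and $\sigma^i_k(x\cdot c_0)$ are Boolean complements in $\mathfrak{A}_+$: the meet condition is axiom (3)(c), and the join condition follows from join-preservation of $\nabla_i$ and distributivity, which reduce it to $\nabla_i d_{i,k}=c_1$. Your version also fills two gaps in the paper: it derives $\nabla_i d_{i,k}=c_1$ from (3)(b) and (3)(a), where the paper simply asserts it, and it correctly points out that $i\neq k$ is needed, a hypothesis the paper's statement omits but its use of (3)(c) silently requires (for $i=k$ the identity would make $\nabla_i$ commute with complement, which fails in general).
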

\begin{proof}
By Theorem \ref{bia to ba}, it suffices to show that $\sigma^i_k(x)\odot\sigma^i_k(x\cdot c_0)=c_0$ and $\sigma^i_k(x)\oplus\sigma^i_k(x\cdot c_0)=c_1$. The first equation is achieved by applying the definition of $\odot$ as well as condition 3(c) of Definition \ref{cia} as follows: 
\[\sigma^i_k(x)\odot\sigma^i_k(x\cdot c_0)=\biggl[\nabla_i\biggl((d_{i,k}\cdot(x\cdot c_0))\cdot c_0\biggl)\cdot\biggl(\biggl(\nabla_i\biggl(\bigl[d_{i,k}\cdot((x\cdot c_0)\cdot c_0)\bigl]\cdot c_0\biggl)\biggl)\cdot c_0\biggl)\biggl]\cdot c_0=c_0.\] For the second equation, Proposition \ref{preservation of joins} together with Proposition \ref{distributivity} yield:  
\begin{align*}
    \sigma^i_k(x)\oplus\sigma^i_k(x\cdot c_0)&=\biggl[\nabla_i\biggl((d_{i,k}\cdot(x\cdot c_0))\cdot c_0\biggl)\cdot\nabla_i\biggl(\bigl[d_{i,k}\cdot((x\cdot c_0)\cdot c_0)\bigl]\cdot c_0\biggl)  \biggl]\cdot\nabla_i\biggl(\bigl[d_{i,k}\cdot((x\cdot c_0)\cdot c_0)\bigl]\cdot c_0\biggl) 
    \\&=\nabla_i\biggl[\biggl(\biggl((d_{i,k}\cdot(x\cdot c_0))\cdot c_0\biggl)\cdot\biggl(\bigl[d_{i,k}\cdot((x\cdot c_0)\cdot c_0)\bigl]\cdot c_0\biggl)  \biggl)\cdot\biggl(\bigl[d_{i,k}\cdot((x\cdot c_0)\cdot c_0)\bigl]\cdot c_0\biggl)\biggl] 
    \\&=\nabla_i\biggl[\biggl(d_{i,k}\cdot\biggl(\bigl[(x\cdot (x\cdot c_0))\cdot(x\cdot c_0)\bigl]\cdot c_0\biggl)\biggl)\cdot c_0\biggl]
\end{align*}
Then by Theorem \ref{bia to ba}, Proposition \ref{constant1}, and the fact that $\nabla_id_{i,k}=c_1$, we obtain the following:  
\[\nabla_i\biggl[\biggl(d_{i,k}\cdot\biggl(\bigl[(x\cdot (x\cdot c_0))\cdot(x\cdot c_0)\bigl]\cdot c_0\biggl)\biggl)\cdot c_0\biggl]=\nabla_i\bigl[(d_{i,k}\cdot(c_1\cdot c_0))\cdot c_0\bigl]=\nabla_i((d_{i,k}\cdot c_0)\cdot c_0)=\nabla_id_{i,k}=c_1.\]
Hence we have $\sigma^i_k(x)\oplus\sigma^i_k(x\cdot c_0)=c_1$ and thus $\sigma^i_k(x\cdot c_0)=\sigma^i_k(x)\cdot c_0$, which completes the proof. 
\end{proof}
\begin{proposition}\label{endomorphism}
    For any $I$-dimensional cylindric implication algebra $A$, the operation $\sigma^i_k$ induces a bounded implication algebra endomorphism on $A$. 
\end{proposition}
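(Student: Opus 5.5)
To show that $\sigma^i_k$ is a bounded implication algebra endomorphism, I have to check two things: $\sigma^i_k(c_0)=c_0$ and $\sigma^i_k(x\cdot y)=\sigma^i_k(x)\cdot\sigma^i_k(y)$. I will work throughout in the Boolean algebra $\mathfrak{A}_+$ given by Theorem \ref{bia to ba}. By Theorem \ref{theorem 2.12}, $x\cdot y=\pi_c(x,y)=(x\cdot c_0)\oplus y$, so implication is just "complement, then join". Writing $d=d_{i,k}$, the substitution becomes $\sigma^i_k(x)=\nabla_i(d\odot x)$, because $(d\cdot(x\cdot c_0))\cdot c_0=d\odot x$ by definition of $\odot$ with $z=c_0$.

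\textbf{Constant.} In $\mathfrak{A}_+$ we have $d\odot c_0=c_0$. Then Definition \ref{mbia}(1) gives $\sigma^i_k(c_0)=\nabla_i c_0=c_0$.

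\textbf{Reduction to joins and complements.} Since $x\cdot y=(x\cdot c_0)\oplus y$, it suffices to prove two facts:
\begin{itemize}
\item[(a)] $\sigma^i_k(x\cdot c_0)=\sigma^i_k(x)\cdot c_0$;
\item[(b)] $\sigma^i_k(u\oplus v)=\sigma^i_k(u)\oplus\sigma^i_k(v)$.
\end{itemize}
Given these,
\[\sigma^i_k(x\cdot y)=\sigma^i_k((x\cdot c_0)\oplus y)=\sigma^i_k(x\cdot c_0)\oplus\sigma^i_k(y)=(\sigma^i_k(x)\cdot c_0)\oplus\sigma^i_k(y)=\sigma^i_k(x)\cdot\sigma^i_k(y),\]
where the last step uses Theorem \ref{theorem 2.12} again. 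Fact (a) is exactly Proposition \ref{endomorphism for complements}, so all the remaining work is in (b).

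\textbf{Proof of (b).} First apply Proposition \ref{distributivity}, the distributive law in $\mathfrak{A}_+$, to get $d\odot(u\oplus v)=(d\odot u)\oplus(d\odot v)$. Next apply Proposition \ref{preservation of joins}, which says $\nabla_i(p\oplus q)=\nabla_i p\oplus\nabla_i q$. Together these give
\[\sigma^i_k(u\oplus v)=\nabla_i\bigl((d\odot u)\oplus(d\odot v)\bigr)=\nabla_i(d\odot u)\oplus\nabla_i(d\odot v)=\sigma^i_k(u)\oplus\sigma^i_k(v).\]
Translating back into the $\cdot$-notation is routine: $\oplus$ is $(p\cdot q)\cdot q$ and $\odot$ is given by the defining polynomial.

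\textbf{Main obstacle.} The real difficulty is (a), which needs the diagonal axiom 3(c) together with $\nabla_i d_{i,k}=c_1$. That input is already packaged in Proposition \ref{endomorphism for complements}, so (a) can simply be cited. What is left is careful bookkeeping to match the $\cdot$-polynomial forms against the Boolean expressions $\odot$, $\oplus$ and $x\cdot c_0$. For this I will rely on $\mathfrak{A}=(\mathfrak{A}_+)^+$ from Theorem \ref{theorem 2.12}, so that every identity can be checked in Boolean form.
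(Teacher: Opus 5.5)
Your proof is correct and is essentially the paper's proof. The paper also gets $\sigma^i_k(c_0)=c_0$ from $\nabla_i c_0=c_0$. It rewrites $x\cdot y$ as $((x\cdot c_0)\cdot y)\cdot y$, using Lemma \ref{lemma 1} where you use Theorem \ref{theorem 2.12}. It then pushes first $d_{i,k}\odot(-)$ and then $\nabla_i$ through the join by Propositions \ref{distributivity} and \ref{preservation of joins}, and finishes with Proposition \ref{endomorphism for complements}. The only difference is that you isolate join-preservation as a separate step (b), while the paper does it inline for the particular join $(x\cdot c_0)\oplus y$.

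One caveat applies to the paper and to you alike. Proposition \ref{endomorphism for complements} rests on axiom 3(c) of Definition \ref{cia}, which is only available for $i\neq k$. For $i=k$ one has $\sigma^i_i=\nabla_i$, which does not commute with $x\mapsto x\cdot c_0$ in general. So both the statement and your argument should be read with the hypothesis $i\neq k$.
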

\begin{proof}
It suffices to demonstrate that $\sigma^i_k(c_0)=c_0$ and $\sigma^i_k(x\cdot y)=\sigma^i_k(x)\cdot\sigma^i_k(y)$ for all $x,y\in A$. For the first equation, observe that Proposition \ref{constant1} together with Definition \ref{mbia}(1) gives:
    \[\sigma^i_k(c_0)=\nabla_i((d_{i,k}\cdot(c_0\cdot c_0))\cdot c_0)=\nabla_i((d_{i,k}\cdot c_1)\cdot c_0)=\nabla_i(c_1\cdot c_0)=\nabla_i(c_0)=c_0.\] For the second equation, note that the definition of $\sigma^i_k$ yields: 
\begin{equation}
    \sigma^i_k(x\cdot y)=\nabla_i((d_{i,k}\cdot((x\cdot y)\cdot c_0))\cdot c_0)
\end{equation}
 By Since $c_0\cdot y=c_1$, by Lemma \ref{lemma 1} we obtain the following: 
 \begin{equation}
     \nabla_i((d_{i,k}\cdot((x\cdot y)\cdot c_0))\cdot c_0)=\nabla_i((d_{i,k}\cdot(((((x\cdot c_0)\cdot y)\cdot y)\cdot c_0)))\cdot c_0)
 \end{equation}
    Hence by Equation (9), Equation (10), Proposition \ref{distributivity}, and Proposition \ref{preservation of joins}, we have: 
    \begin{align}
    \begin{split}
        \sigma^i_k(x\cdot y)&=\nabla_i\biggl[\biggl(d_{i,k}\cdot\biggl(\biggl(\bigl[((x\cdot c_0)\cdot y)\cdot y\bigl]\cdot c_0\biggl)\biggl)\biggl)\cdot c_0\biggl]
        \\&=\nabla_i\biggl[\biggl(\bigl[(d_{i,k}\cdot((x\cdot c_0)\cdot c_0))\cdot c_0\bigl]\cdot\bigl[(d_{i,k}\cdot(y\cdot c_0))\cdot c_0\bigl]\biggl)\cdot(d_{i,k}\cdot(y\cdot c_0))\cdot c_0\biggl]
        \\&=\biggl[\nabla_i\biggl(\bigl[d_{i,k}\cdot((x\cdot c_0)\cdot c_0)\bigl]\cdot c_0\biggl)\cdot\nabla_i\biggl((d_{i,k}\cdot(y\cdot c_0))\cdot c_0\biggl)\biggl]\cdot\nabla_i\bigl[(d_{i,k}\cdot(y\cdot c_0))\cdot c_0\bigl].
        \end{split}
    \end{align}
    Applying the definition of $\sigma^i_k$ to the right-hand side of Equation (11) then gives us: 
    \begin{equation}
        \biggl[\nabla_i\biggl(\bigl[d_{i,k}\cdot((x\cdot c_0)\cdot c_0)\bigl]\cdot c_0\biggl)\cdot\nabla_i\biggl((d_{i,k}\cdot(y\cdot c_0))\cdot c_0\biggl)\biggl]\cdot\nabla_i\bigl[(d_{i,k}\cdot(y\cdot c_0))\cdot c_0\bigl]=\bigl[\sigma^i_k(x\cdot c_0)\cdot\sigma^i_k(y)\bigl]\cdot\sigma^i_k(y).
    \end{equation}
Finally, Equations (11) and (12) together with Proposition \ref{endomorphism for complements} and Lemma \ref{lemma 1} yield:
    \[\sigma^i_k(x\cdot y)=\bigl[\sigma^i_k(x\cdot c_0)\cdot\sigma^i_k(y)\bigl]\cdot\sigma^i_k(y)=\bigl[(\sigma^i_k(x)\cdot c_0)\cdot \sigma^i_k(y)\bigl]\cdot\sigma^i_k(y)=\sigma^i_k(x)\cdot\sigma^i_k(y)\]
    Therefore, we conclude that $\sigma^i_k$ is a bounded implication algebra endomorphism on $A$.
\end{proof}
\begin{lemma}\label{lemma 3.20}
Any $I$-dimensional cylindric Boolean algebra satisfies: \[\exists_i\pi_c(\pi_c(\delta_{i,k},\pi_c(x,0)),0)=\exists_i(\delta_{i,k}\wedge x).\] 
\end{lemma}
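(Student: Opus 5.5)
The plan is to compute the argument of $\exists_i$ on the left-hand side directly inside the underlying Boolean algebra, using only the definition $\pi_c(x,y)=x'\vee y$ from Theorem \ref{ba to bia}. The quantifier $\exists_i$ is applied to both sides of the claimed identity. It therefore suffices to show the Boolean identity
\[\pi_c(\pi_c(\delta_{i,k},\pi_c(x,0)),0)=\delta_{i,k}\wedge x,\]
after which the lemma follows by applying $\exists_i$ to both sides. No property of the quantifier or of the diagonal elements is needed; $\delta_{i,k}$ is treated as an arbitrary element of the Boolean algebra.

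The inner term is handled first. As in the proof of Theorem \ref{mba to mia}, we have $\pi_c(x,0)=x'\vee 0=x'$. Hence
\[\pi_c(\delta_{i,k},\pi_c(x,0))=\delta_{i,k}'\vee x'.\]
Applying $\pi_c(-,0)$ once more gives $(\delta_{i,k}'\vee x')'\vee 0=(\delta_{i,k}'\vee x')'$. By De Morgan's identities and the fact that $'$ is an involution, this equals $\delta_{i,k}''\wedge x''=\delta_{i,k}\wedge x$. This is the same chain of equalities used in Theorem \ref{theorem 2.10} to identify $x\odot y$ with $x\wedge y$. Indeed, the term $\pi_c(\pi_c(\delta_{i,k},\pi_c(x,0)),0)$ is precisely $\delta_{i,k}\odot x$ computed in $\mathfrak{B}^+$, so one may alternatively cite Theorem \ref{theorem 2.10} directly.

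There is no genuine obstacle here. The only point requiring care is the bookkeeping of the unit $0$ and the double complements. It should also be made explicit that the equality is established before $\exists_i$ is applied, so that no interaction between $\exists_i$ and $\pi_c$ is ever needed.
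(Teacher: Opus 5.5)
Your proof is correct and matches the paper's argument. The paper likewise uses $\pi_c(x,0)=x'$ and unfolds $\pi_c$ to get $\exists_i(\delta_{i,k}'\vee x')'$, then applies De Morgan and the involutivity of $'$ to reach $\exists_i(\delta_{i,k}\wedge x)$; your remark that the argument of $\exists_i$ is just $\delta_{i,k}\odot x$ in $\mathfrak{B}^+$, so that Theorem \ref{theorem 2.10} could be cited directly, is a valid shortcut.
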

\begin{proof}
By a repeated application of the definition of $\pi_c$ and the fact that $\pi_c(x,0)=x'$ yields the following:  \begin{equation}
\exists_i\pi_c(\pi_c(\delta_{i,k},\pi_c(x,0)),0)=\exists_i\pi_c(\pi_c(\delta_{i,k},x'),0)=\exists_i\pi_c(\delta_{i,k},x')'=\exists_i(\delta_{i,k}'\vee x')'
\end{equation}
Then by applying De Morgan's identities and the fact that $'$ is an involution yields: 
\begin{equation}
    \exists_i(\delta_{i,k}'\vee x')'=\exists_i(\delta_{i,k}''\wedge x'')=\exists_i(\delta_{i,k}\wedge x)
\end{equation}
 By Equations (13) and (14), we have $\exists_i\pi_c(\pi_c(\delta_{i,k},\pi_c(x,0)),0)=\exists_i(\delta_{i,k}\wedge x)$, as desired.  
\end{proof}
\begin{lemma}\label{lemma 3.21}
Any $I$-dimensional cylindric Boolean algebra satisfies: \[\exists_i\pi_c(\pi_c(\delta_{i,k},\pi_c(\pi_c(x,0),0)),0)=\exists_i(\delta_{i,k}\wedge x').\]  
\end{lemma}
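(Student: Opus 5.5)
The plan is to follow the pattern of the proof of Lemma \ref{lemma 3.20} exactly: unfold the nested occurrences of $\pi_c$ from the inside out, using only the definition $\pi_c(x,y)=x'\vee y$, the consequence $\pi_c(x,0)=x'\vee 0=x'$, De Morgan's identities, and the fact that $'$ is an involution. No property of $\exists_i$ or of the diagonal elements will be needed, since the identity concerns only the Boolean term inside the quantifier, and $\exists_i$ is applied to equal arguments on both sides.

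First, simplify the innermost subterm. Two applications of $\pi_c(z,0)=z'$ give
\[\pi_c(\pi_c(x,0),0)=\pi_c(x',0)=x''=x.\]
Next, substitute this into the middle layer to get
\[\pi_c(\delta_{i,k},\pi_c(\pi_c(x,0),0))=\pi_c(\delta_{i,k},x)=\delta_{i,k}'\vee x.\]
Then the outer $\pi_c(\,\cdot\,,0)$ produces the complement $(\delta_{i,k}'\vee x)'$. De Morgan's identities and involution turn this into $\delta_{i,k}''\wedge x'=\delta_{i,k}\wedge x'$. Applying $\exists_i$ to both sides of this chain of equalities yields the claimed identity.

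There is no real obstacle here. The only point needing care is bookkeeping: the inner double application $\pi_c(\pi_c(x,0),0)$ collapses to $x$, not $x'$. The complement $x'$ on the right-hand side arises from the final De Morgan step, because the outer $\pi_c(\,\cdot\,,0)$ complements the disjunct $x$. I would present the computation as two displayed equations, mirroring Equations (13) and (14), and then conclude.
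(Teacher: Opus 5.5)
Your proposal is correct and follows essentially the same route as the paper: unfold from the inside, collapsing $\pi_c(\pi_c(x,0),0)$ to $x''=x$, writing the middle layer as $\delta_{i,k}'\vee x$, and then complementing inside $\exists_i$ by De Morgan and involution. Your intermediate expression $\delta_{i,k}''\wedge x'$ is actually cleaner than the paper's displayed step $\exists_i(\delta_{i,k}\wedge x')''$, which is a harmless slip in the printed chain.
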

\begin{proof}
    A similar calculation to that of Lemma \ref{lemma 3.20} gives the following:
\begin{align*}
    \exists_i\pi_c(\pi_c(\delta_{i,k},\pi_c(\pi_c(x,0),0)),0)&=\exists_i\pi_c(\pi_c(\delta_{i,k},x''),0)=\exists_i\pi_c(\delta_{i,k},x)'\\&=\exists_i(\delta'_{i,k}\vee x)'=\exists_i(\delta_{i,k}\wedge x')''\\&=\exists_i(\delta_{i,k}\wedge x')
\end{align*}
       which completes the proof. 
\end{proof}
\begin{theorem}\label{cba to cia}
    Every $I$-dimensional cylindric Boolean algebra can be converted into an $I$-dimensional cylindric implication algebra. 
\end{theorem}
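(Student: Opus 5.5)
Given an $I$-dimensional cylindric Boolean algebra $\langle A;\wedge,\vee,',0,1,(\exists_i)_{i\in I},(\delta_{i,k})_{i,k\in I}\rangle$, I take the structure $\langle A;\pi_c,0,(\exists_i)_{i\in I},(\delta_{i,k})_{i,k\in I}\rangle$, setting $\nabla_i:=\exists_i$ and $d_{i,k}:=\delta_{i,k}$, and check the clauses of Definition \ref{cia} one at a time.

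\emph{Clause 1.} For each $i\in I$ the reduct $\langle A;\wedge,\vee,',0,1,\exists_i\rangle$ is a monadic Boolean algebra by Definition \ref{quantum cylindric algebra}(1). Hence Theorem \ref{mba to mia} gives directly that $\langle A;\pi_c,0,\exists_i\rangle$ is a monadic implication algebra. Note that the constant $c_1$ of this bounded implication algebra is $\pi_c(x,x)=x'\vee x=1$.

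\emph{Clause 2.} The operators are unchanged, so commutativity $\nabla_i\nabla_k x=\nabla_k\nabla_i x$ is exactly Definition \ref{quantum cylindric algebra}(2).

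\emph{Clause 3(a).} Symmetry $d_{i,k}=d_{k,i}$ is immediate. Moreover $d_{i,i}=\delta_{i,i}=1=c_1$.

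\emph{Clause 3(b).} As in equations (5)--(8), we have $\pi_c(\pi_c(a,\pi_c(b,0)),0)=a\wedge b$. So the left-hand side of 3(b) becomes $\exists_k(\delta_{i,k}\wedge\delta_{k,l})$, which equals $\delta_{i,l}$ whenever $i,l\neq k$.

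\emph{Clause 3(c).} This is the only clause needing care, because of the nested translation; I will use Lemmas \ref{lemma 3.20} and \ref{lemma 3.21}.
\begin{itemize}
\item By Lemma \ref{lemma 3.20}, the first inner term $\nabla_i\bigl((d_{i,k}\cdot(x\cdot c_0))\cdot c_0\bigr)$ translates to $\exists_i(\delta_{i,k}\wedge x)$.
\item By Lemma \ref{lemma 3.21}, the term $\nabla_i\bigl([d_{i,k}\cdot((x\cdot c_0)\cdot c_0)]\cdot c_0\bigr)$ translates to $\exists_i(\delta_{i,k}\wedge x')$.
\item The outer shape $[a\cdot(b\cdot c_0)]\cdot c_0$ is $a\wedge b$.
\end{itemize}
So the whole left-hand side of 3(c) equals $\exists_i(\delta_{i,k}\wedge x)\wedge\exists_i(\delta_{i,k}\wedge x')$. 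For $i\neq k$ this is $0=c_0$ by Definition \ref{quantum cylindric algebra}(3(c)).

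The one point to watch is a mismatch in the paper's 3(c): read literally, its argument $(x\cdot c_0)\cdot c_0$ is $x''=x$, while Lemma \ref{lemma 3.21} is stated for the complement $x'$. I would reconcile this so that the two inner terms pair as $x$ and $x'$, as the Boolean axiom requires; this is also how Proposition \ref{endomorphism for complements} uses 3(c). With that settled, the verification is routine.
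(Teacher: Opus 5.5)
Your argument is correct and matches the paper's proof step for step: clause 1 via Theorem \ref{mba to mia}, clauses 2 and 3(a) directly, and clauses 3(b) and 3(c) by reading $[a\cdot(b\cdot c_0)]\cdot c_0$ as $a\wedge b$ together with Lemmas \ref{lemma 3.20} and \ref{lemma 3.21}. The only correction is that the ``mismatch'' in your last paragraph does not exist: in $[d_{i,k}\cdot((x\cdot c_0)\cdot c_0)]\cdot c_0$ the subterm $(x\cdot c_0)\cdot c_0$ fills the slot $b\cdot c_0$ of that meet pattern with $b=x\cdot c_0=x'$, so the term already equals $\delta_{i,k}\wedge x'$, exactly as Lemma \ref{lemma 3.21} computes (and as your own second bullet says), and no reconciliation---in particular no change to that argument---is needed.
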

\begin{proof}
    By Theorem \ref{mba to mia}, every monadic Boolean algebra can be converted into a monadic implication algebra and hence it suffices to verify that conditions 2 and 3(a)-3(c) of Definition \ref{cia} are satisfied. It is trivial that $\nabla_i\nabla_kx=\nabla_k\nabla_ix$ and $\delta_{i,k}=\delta_{k,i}$ as well as $\delta_{i,i}=1$ for all $i,k\in I$ by conditions 2 and 3(a) of Definition \ref{cia}. For condition 3(b), let $i,l\not=k$. It suffices to demonstrate $\exists_k\pi_c(\pi_c(\delta_{i,k},\pi_c(\delta_{k,l},0)),0)=\delta_{i,l}$. By setting $\delta_{i,k}=x$, Lemma \ref{lemma 3.20} along with Definition \ref{quantum cylindric algebra}(3(b)) yields $\exists_k\pi(\pi(\delta_{i,k},\pi_c(\delta_{k,l},0)),0)=\exists_i(\delta_{i,k}\wedge\delta_{k,l})=\delta_{i,l}$.

    We must now demonstrate the following: 
\begin{equation}
    \pi_c(\pi_c(\exists_i\pi_c(\pi_c(\delta_{i,k},\pi_c(x,0)),0),\pi_c(\exists_i\pi_c(\pi_c(\delta_{i,k},\pi_c(x,0),0),0),0)),0)=0
\end{equation}
to establish condition 3(c). Let $\phi$ and $\psi$ be defined in the following manner: \[\phi:=\exists_i\pi_c(\pi_c(\delta_{i,k},\pi_c(x,0)),0), \hspace{.2cm}\psi:=\exists_i\pi_c(\pi_c(\delta_{i,k},\pi_c(\pi_c(x,0),0)),0)\]  so that $\pi_c(\pi_c(\phi,\pi_c(\psi,0)),0)$ provides the left-hand side of Equation (15), i.e.,:  
\[\pi_c(\pi_c(\phi,\pi_c(\psi,0)),0)=\pi_c(\pi_c(\exists_i\pi_c(\pi_c(\delta_{i,k},\pi_c(x,0)),0),\pi_c(\exists_i\pi_c(\pi_c(\delta_{i,k},\pi_c(x,0),0),0),0)),0).\]
By applying Lemma \ref{lemma 3.20}, Lemma \ref{lemma 3.21}, as well as Definition \ref{quantum cylindric algebra}(3(c)), we obtain: 
\begin{align*}
    \pi_c(\pi_c(\phi,\pi_c(\psi,0)),0)&=\pi_c(\pi_c(\phi,y'),0)\\&=\pi_c((\phi'\vee\psi'),0)\\&=(\phi'\vee\psi')'=\phi\wedge\psi\\&=\exists_i(\delta_{i,k}\wedge x)\wedge\exists_i(\delta_{i,k}\wedge x')\\&=0
\end{align*}
Therefore Equation (15) is satisfied, which completes the proof. 
\end{proof}
\begin{theorem}\label{cia to cba}
    Every $I$-dimensional cylindric implication algebra can be converted into an $I$-dimensional cylindric Boolean algebra. 
\end{theorem}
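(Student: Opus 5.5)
The plan mirrors the proof of Theorem \ref{cba to cia} run in reverse, using the translation between the implication operations and the Boolean operations of $\mathfrak{A}_+$ fixed in Section 2. Let $\mathfrak{A}=\langle A;\cdot,c_0,(\nabla_i)_{i\in I},(d_{i,k})_{i,k\in I}\rangle$ be an $I$-dimensional cylindric implication algebra. By Theorem \ref{bia to ba}, the bounded implication reduct $\langle A;\cdot,c_0\rangle$ yields a Boolean algebra $\langle A;\odot,\oplus,-,c_0,c_1\rangle$. In it,
\[x\odot y=(x\cdot(y\cdot c_0))\cdot c_0,\qquad -x=x\cdot c_0,\qquad x\oplus y=(x\cdot y)\cdot y.\]
Put $\exists_i:=\nabla_i$ and $\delta_{i,k}:=d_{i,k}$. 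We then check the clauses of Definition \ref{quantum cylindric algebra} one at a time.

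For clause (1), Definition \ref{cia}(1) says each $\langle A;\cdot,c_0,\nabla_i\rangle$ is a monadic implication algebra. Theorem \ref{theorem 3.13} then makes each $\langle A;\odot,\oplus,-,c_0,c_1,\nabla_i\rangle$ a monadic Boolean algebra. All of these share the same Boolean reduct, because that reduct depends only on $\cdot$ and $c_0$.

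Clause (2) is literally Definition \ref{cia}(2).

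Clause 3(a) is Definition \ref{cia}(3(a)), once we note that the top element of $\mathfrak{A}_+$ is $c_1$.

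For clause 3(b), take $i,l\neq k$. The term $(d_{i,k}\cdot(d_{k,l}\cdot c_0))\cdot c_0$ is by definition $d_{i,k}\odot d_{k,l}$. So Definition \ref{cia}(3(b)) reads $\nabla_k(d_{i,k}\odot d_{k,l})=d_{i,l}$, which is exactly the required identity.

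Clause 3(c) is where the real work lies, since it needs the most careful term-by-term unpacking. Take $i\neq k$ and rewrite each subterm of Definition \ref{cia}(3(c)) in Boolean notation:
\begin{itemize}
\item $(d_{i,k}\cdot(x\cdot c_0))\cdot c_0=d_{i,k}\odot x$;
\item $[d_{i,k}\cdot((x\cdot c_0)\cdot c_0)]\cdot c_0=d_{i,k}\odot(-x)$;
\item the outer expression $[\,u\cdot(v\cdot c_0)\,]\cdot c_0$ is $u\odot v$, with $u=\nabla_i(d_{i,k}\odot x)$ and $v=\nabla_i(d_{i,k}\odot -x)$.
\end{itemize}
The axiom therefore becomes
\[\nabla_i(d_{i,k}\odot x)\odot\nabla_i(d_{i,k}\odot -x)=c_0,\]
which is precisely Definition \ref{quantum cylindric algebra}(3(c)) in $\mathfrak{A}_+$.

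The main obstacle is bookkeeping. One must make sure the second factor in Definition \ref{cia}(3(c)) really has the shape $v\cdot c_0$ with $v$ the second cylindrified term, so that the outer term is a meet and not something else. One must also make sure $(x\cdot c_0)\cdot c_0$ is recognised as $-x$, not simplified to $x$. If the bracketing turns out ambiguous, I would fall back on the reading used in Proposition \ref{endomorphism for complements}, where the same expression is identified with $\sigma^i_k(x)\odot\sigma^i_k(x\cdot c_0)$. That identification agrees with the translation above.

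With these checks done, $\langle A;\odot,\oplus,-,c_0,c_1,(\nabla_i)_{i\in I},(d_{i,k})_{i,k\in I}\rangle$ is an $I$-dimensional cylindric Boolean algebra.
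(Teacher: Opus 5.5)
Your proposal is correct and is essentially the paper's proof. Like the paper, you get the monadic structure from Theorem~\ref{theorem 3.13}, note that clauses 2 and 3(a) are immediate, and obtain 3(b) and 3(c) by reading $(u\cdot(v\cdot c_0))\cdot c_0$ as $u\odot v$ and $x\cdot c_0$ as $-x$. Your term-by-term unpacking of 3(c) and your remark that all the $\nabla_i$ share one Boolean reduct only spell out what the paper leaves implicit.
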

\begin{proof}
    By Theorem \ref{theorem 3.13}, it suffices to show that every $I$-dimensional cylindric implication algebra satisfies conditions 2 and 3(a)-4(c) of Definition \ref{quantum cylindric algebra}. Condition 2 and 3(a) are immediate by conditions 2 and 3(a) of Definition \ref{cia}. For condition 3(b), assume that $i,k\not=l$. Then by Definition \ref{cia}(3(b)) we have: 
    \[\nabla_k(d_{i,k}\odot d_{k,l})=\nabla_{k}((d_{i,k}\cdot (d_{k,l}\cdot c_0))\cdot c_0)=d_{i,l},\] as desired. To see that Definition \ref{quantum cylindric algebra}(3(c)) is satisfied, assume that $i\not= k$ and note: 
    \[\nabla_i(d_{i,k}\odot x)\odot\nabla_{i}(d_{i,k}\odot -x)=\biggl[\nabla_i\biggl((d_{i,k}\cdot(x\cdot c_0))\cdot c_0\biggl)\cdot\biggl(\biggl(\nabla_i\biggl(\bigl[d_{i,k}\cdot((x\cdot c_0)\cdot c_0)\bigl]\cdot c_0\biggl)\biggl)\cdot c_0\biggl)\biggl]\cdot c_0=c_0\]
where the second equality follows by Definition \ref{cia}(3(c)). This completes the proof that every $I$-dimensional cylindric implication algebra can be converted into an $I$-dimensional cylindric Boolean algebra. 
\end{proof}
The result collected so far allow us to arrive at the following. 
\begin{theorem}
    Let $\mathfrak{B}=\langle B;\wedge,\vee,',0,1,(\exists_i)_{i\in I},(\delta_{i,k})_{i,k\in I}\rangle$ be an $I$-dimensional cylindric Boolean algebra, let $\mathfrak{B}^+=\langle B;\pi_c,0,(\exists_i)_{i\in I},(\delta_{i,k})_{i,k\in I}\rangle$ be the $I$-dimensional cylindric implication algebra obtained from $\mathfrak{B}$ via Theorem \ref{cba to cia}, and let $(\mathfrak{B}^{+})_+=\langle B;\odot,\oplus,-,0,1,(\exists_i)_{i\in I},(\delta_{i,k})_{i,k\in I}\rangle$ be the $I$-dimensional cylindric Boolean algebra obtained from $\mathfrak{B}^+$ via Theorem \ref{cia to cba}. Then $\mathfrak{B}=(\mathfrak{B}^+)_+$. 
\end{theorem}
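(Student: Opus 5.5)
The plan is to reduce the statement to the Boolean-algebra level result, Theorem \ref{theorem 2.10}, and then observe that the extra operators and constants pass through both constructions unchanged.

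The constructions of Theorem \ref{cba to cia} and Theorem \ref{cia to cba} do not touch the carrier set $B$, the family $(\exists_i)_{i\in I}$, or the constants $(\delta_{i,k})_{i,k\in I}$. The cylindric Boolean algebra $\mathfrak{B}$ is first sent to $\mathfrak{B}^+$, where $\exists_i$ plays the role of $\nabla_i$ and $\delta_{i,k}$ that of $d_{i,k}$. Then $\mathfrak{B}^+$ is sent to $(\mathfrak{B}^+)_+$, where $\nabla_i$ is read back as the quantifier and $d_{i,k}$ as the diagonal element. So the two algebras share the same carrier and literally the same interpretations of $\exists_i$ and $\delta_{i,k}$ for all $i,k\in I$. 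The only remaining task is to check that the Boolean operations agree: $\wedge=\odot$, $\vee=\oplus$, $'=-$, and that the bounds $0,1$ coincide.

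For these Boolean operations I will invoke Theorem \ref{theorem 2.10} directly. Its hypothesis concerns only the Boolean reduct $\langle B;\wedge,\vee,',0,1\rangle$ of $\mathfrak{B}$, together with the bounded implication reduct $\langle B;\pi_c,0\rangle$ of $\mathfrak{B}^+$ and the Boolean algebra built from that reduct via Theorem \ref{bia to ba}. By construction, the bounded implication algebra reduct of $\mathfrak{B}^+$ is exactly the algebra produced in Theorem \ref{ba to bia}. Likewise, the Boolean reduct of $(\mathfrak{B}^+)_+$ is exactly the algebra produced in Theorem \ref{bia to ba}. Theorem \ref{theorem 2.10} then gives $x\wedge y=x\odot y$, $x\vee y=x\oplus y$, $x'=-x$, and that the bounds coincide, with the top $c_1=\pi_c(x,x)=x'\vee x=1$.

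Combining the two observations, $\mathfrak{B}$ and $(\mathfrak{B}^+)_+$ have the same carrier and agree on every symbol of the signature, so they are equal. The same argument proves Theorem \ref{theorem 3.15} in the monadic case by dropping the diagonal constants.

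There is no real obstacle. The only step needing care is confirming that the constructions in Theorems \ref{cba to cia} and \ref{cia to cba} really pass the quantifiers and diagonals through unchanged, rather than redefining them through $\pi_c$. An inspection of those proofs shows that they only verify axioms and do not alter the operators.
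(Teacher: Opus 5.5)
Your proposal is correct and matches the paper's intended argument. The paper gives no separate proof and simply derives the statement from the preceding results: Theorems \ref{cba to cia} and \ref{cia to cba} leave the carrier, the quantifiers $\exists_i$, and the diagonals $\delta_{i,k}$ untouched, so the claim reduces to the Boolean-level identity $\mathfrak{B}=(\mathfrak{B}^+)_+$ of Theorem \ref{theorem 2.10}, exactly as you argue.
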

\begin{theorem}
    Let $\mathfrak{A}=\langle A;\cdot,c_0,(\nabla_i)_{i\in I}, (d_{i,k})_{i,k\in I}\rangle$ be an $I$-dimensional cylindric implication algebra, let $\mathfrak{A}_+=\langle A;\odot,\oplus,-,c_0,c_1,(\nabla_i)_{i\in I},(d_{i,k})_{i,k\in I}\rangle$ be the $I$-dimensional cylindric Boolean algebra obtained from $\mathfrak{A}$ via Theorem \ref{cia to cba}, and let $(\mathfrak{A}_+)^+=\langle A;\pi_c,c_0,(\nabla_{i})_{i\in I},(d_{i,k})_{i,k\in I}\rangle$ be the I-dimensional cylindric implication algebra obtained from $\mathfrak{A}_+$ via Theorem \ref{cba to cia}. Then $\mathfrak{A}=(\mathfrak{A}_+)^+$. 
\end{theorem}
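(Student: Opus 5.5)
The statement has three components: the bounded implication algebra reduct, the operators $(\nabla_i)_{i\in I}$, and the constants $(d_{i,k})_{i,k\in I}$. I would check that $(\mathfrak{A}_+)^+$ and $\mathfrak{A}$ agree on each of them.

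For the reduct $\langle A;\cdot,c_0\rangle$, no new work is needed. Theorem \ref{theorem 2.12} already gives $x\cdot y=\pi_c(x,y)$, where $\pi_c$ is computed in the Boolean algebra $\mathfrak{A}_+$ obtained via Theorem \ref{bia to ba}. Concretely, Lemma \ref{lemma 1} applied with $z=c_0$ gives $x\cdot y=((x\cdot c_0)\cdot y)\cdot y=-x\oplus y$. Both algebras also carry the same constant $c_0$, since neither construction changes it.

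For the quantifiers and diagonals, I would observe that Theorem \ref{cia to cba} and Theorem \ref{cba to cia} both leave these operations literally unchanged on the same carrier set. Theorem \ref{cia to cba} equips $\mathfrak{A}_+$ with the same $\nabla_i$ and $d_{i,k}$ as $\mathfrak{A}$. Theorem \ref{cba to cia} then reuses the quantifiers and diagonals of $\mathfrak{A}_+$ unchanged. So $(\mathfrak{A}_+)^+$ carries exactly $(\nabla_i)_{i\in I}$ and $(d_{i,k})_{i,k\in I}$. The monadic part has the same structure and is exactly Theorem \ref{theorem 3.16} applied to each $\langle A;\cdot,c_0,\nabla_i\rangle$. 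Combining these, the two structures have identical carriers, identical binary operation, identical $c_0$, identical $\nabla_i$ and identical $d_{i,k}$, so $\mathfrak{A}=(\mathfrak{A}_+)^+$.

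The only step with real content is the identity $x\cdot y=\pi_c(x,y)$. Everything else is bookkeeping: confirming that the intermediate Boolean structure produced by Theorem \ref{cia to cba} really is the one from Theorem \ref{bia to ba}, so that Theorem \ref{theorem 2.12} applies verbatim. One further point deserves attention. The axioms of Definition \ref{cia} for $(\mathfrak{A}_+)^+$, as established in Theorem \ref{cba to cia}, are stated in terms of $\pi_c$. Once $\pi_c$ coincides with $\cdot$, these are literally the axioms for $\mathfrak{A}$, so no further verification is required.
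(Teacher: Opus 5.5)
Your proposal is correct and follows essentially the same route as the paper, which states this theorem without a separate proof as a consequence of results already established. There, the implication reduct is handled by Theorem~\ref{theorem 2.12} (via Lemma~\ref{lemma 1} with $z=c_0$, giving $x\cdot y=(x\cdot c_0)\oplus y=\pi_c(x,y)$), and Theorems~\ref{cia to cba} and~\ref{cba to cia} carry the $\nabla_i$ and $d_{i,k}$ over unchanged.
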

\begin{definition}
    Let $A$ and $B$ be $I$-dimensional cylindric implication algebras. Then a function $h\colon A\to B$ is a \emph{homomorphism} provided $h$ is a homomorphism from the monadic implication algebra reduct $\langle A;\cdot,c_0,\nabla_i\rangle$ of $A$ to the monadic implication algebra reduct $\langle B;\cdot,c_0,\nabla_i\rangle$ of $B$ and satisfies $h(d_{i,k})=d_{i,k}$.  
\end{definition}
\begin{definition}
    By $\mathbf{CIA}$ we denote the category of $I$-dimensional cylindric implication algebras and homomorphisms and by $\mathbf{CBA}$ we denote the category of $I$-dimensional cylindric Boolean algebras and homomorphisms. 
 \end{definition}
\begin{theorem}\label{theorem 3.28}
    $\mathbf{CIA}$ is isomorphic to $\mathbf{CBA}$. 
\end{theorem}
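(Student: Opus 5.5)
The plan follows the pattern of Theorem \ref{theorem 3.18}. We define functors $(-)_+\colon\mathbf{CIA}\to\mathbf{CBA}$ and $(-)^+\colon\mathbf{CBA}\to\mathbf{CIA}$. On objects they are the constructions of Theorem \ref{cia to cba} and Theorem \ref{cba to cia}; on morphisms both act as the identity on underlying functions. By the two unlabelled theorems preceding the definition of $\mathbf{CIA}$ and $\mathbf{CBA}$, namely $\mathfrak{B}=(\mathfrak{B}^+)_+$ and $\mathfrak{A}=(\mathfrak{A}_+)^+$, these assignments are mutually inverse on objects. Since both functors leave the underlying maps unchanged, functoriality (preservation of identities and composition) is automatic. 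So the only real content is that a function $h\colon A\to B$ is a $\mathbf{CIA}$-homomorphism exactly when it is a $\mathbf{CBA}$-homomorphism between the corresponding algebras.

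For this, I split the signature into three layers.

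\begin{enumerate}
\item \emph{Bounded implication reduct.} Theorem \ref{theorem 2.15} says that $h$ preserves $\cdot$ and $c_0$ iff it preserves $\odot,\oplus,-,c_0,c_1$. In one direction the Boolean operations are term-defined from $\cdot$ and $c_0$. In the other, $\cdot$ coincides with $\pi_c$ by Theorem \ref{theorem 2.12}, and $\pi_c$ is term-defined from $'$ and $\vee$.
\item \emph{Quantifiers.} Preservation of each $\nabla_i=\exists_i$ is the same condition on both sides, because the operators are literally the same maps. This is exactly the content of Theorem \ref{theorem 3.18} applied coordinatewise for each $i\in I$.
\item \emph{Diagonals.} Preservation of the diagonal constants, $h(d_{i,k})=d_{i,k}$ versus $h(\delta_{i,k})=\delta_{i,k}$, is again the same condition, since $d_{i,k}=\delta_{i,k}$ under the translations.
\end{enumerate}

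Hence the hom-sets coincide, and the functors form an isomorphism of categories.

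The only step needing care is the first layer, in the direction from Boolean to implication homomorphisms. The displayed computation in the proof of Theorem \ref{theorem 2.15} is sloppy here (it writes $\pi_c(h(x)',h(y))$). I would redo it cleanly: $h(x\cdot y)=h(x'\vee y)=h(x)'\vee h(y)=h(x)\cdot h(y)$, using that the implication on the target also equals $\pi_c$ by Theorem \ref{theorem 2.12}.

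I would also check that the $\mathbf{CBA}$ morphisms are understood as Boolean homomorphisms preserving every $\exists_i$ and every $\delta_{i,k}$. This is the natural reading, since the paper does not define them explicitly.

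Beyond this bookkeeping, no genuine obstacle is expected: all equational axioms have already been transported in Theorems \ref{cba to cia} and \ref{cia to cba}.
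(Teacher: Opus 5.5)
Your proposal is correct and follows essentially the same route as the paper. The paper writes no separate proof of Theorem~\ref{theorem 3.28}; as in Theorem~\ref{theorem 3.18}, the intended argument combines the two object-level round-trip identities with the homomorphism correspondence of Theorem~\ref{theorem 2.15}, with the quantifiers and diagonal constants carried over unchanged. Your explicit term-definability argument and your correction of the slip $\pi_c(h(x)',h(y))$ to $h(x)\cdot h(y)$ simply spell out what the paper leaves implicit.
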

\section{Spectral Duality for $I$-dimensional cylindric implication algebras}
As an immediate application of the isomorphism established in the previous section between $\mathbf{CIA}$ and $\mathbf{CBA}$, we proceed by establishing a dual equivalence between $\mathbf{CIA}$  and the category $\mathbf{CUV}$ of cylindric upper Vietoris spaces and spectral $p$-morphisms, which were introduced in \cite{mcdonald2} as the choice-free topological duals of cylindric Boolean algebras, and are relational extensions of the upper Vietoris spaces introduced in \cite{bez}, the choice-free topological duals of the Boolean algebras.
\subsection{Cylindric Upper Vietoris Spaces}
 A fundamental concept underlying these dualities is that of a possibility frame (see \cite{bez, holliday}). A \emph{possibility frame} is a triple $\langle X;\leq,P\rangle$ such that $\langle X;\leq\rangle$ is a poset and $XP$ is the regular open subsets in the upset topology on $\langle X;\leq\rangle$ where $X\in P$ and $P$ is closed under $\cap$ and $^*$ where: 
\[U^*=\{x\in X:x'\not\in U\hspace{.2cm}\text{for all $x'\geq x$}\}.\]
Recall that if $X$ is a topological space, then an open set $U\subseteq X$ is \emph{regular open} iff $\text{Int}(\text{Cl}(U))=U$ where $\text{Int}$ and $\text{Cl}$ are the operations of interior and closure on $X$. In the upset topology $\text{UP}(X;\leq)$, we have: 
\[\text{Int}_{\leq}(U)=\{x\in X:y\in U\hspace{.2cm}\text{for all $y\geq x$}\},\hspace{.3cm}\text{Cl}_{\leq}(U)=\{x\in X:y\in U\hspace{.2cm}\text{for some $y\geq x$}\}\]
 so $\text{Int}_{\leq}(U)=X\setminus\text{Cl}_{\leq}(X\setminus U)$. Hence, an open set $U$ is \emph{regular open} in the upset topology on $X$ iff $\text{Int}_{\leq}(\text{Cl}_{\leq}(U))=U$. Since $U^*=\text{Int}_{\leq}(X\setminus U)$, an open set $U$ is \emph{regular open} in $\text{UP}(X;\leq)$ iff $U^{**}=U$. By $\mathcal{REG}(X)$, we denote the collection of all such regular opens, by $\mathcal{C}(X)$ we denote the collection of compact subsets of $X$, and by $\mathcal{O}(X)$ we denote the collection of open subsets of $X$ so that: 
\[\mathcal{CO}(X)=\mathcal{C}(X)\cap\mathcal{O}(X),\hspace{.2cm}\mathcal{COREG}(X)=\mathcal{CO}(X)\cap\mathcal{REG}(X).\]
The following class of spaces was introduced by Bezhanishvili and Holliday \cite{bez} and form the choice-free topological duals of the Boolean algebras. Let: 
\[\mathcal{COREG}_X(x)=\{U\in\mathcal{COREG}(X):x\in U\}.\]

\begin{definition}[\cite{bez}]\label{uv space}
    A \emph{UV-space} is a $T_0$-space $X$ satisfying the following conditions: 
    \begin{enumerate}
        \item $\mathcal{COREG}(X)$ is closed under $\cap$, $\text{Int}_{\leqslant}(X\setminus\cdot)$, and is a basis for $X$; 
        \item every proper filter in $\mathcal{COREG}(X)$ is $\mathcal{COREG}_X(x)$ for some $x\in X$. 
    \end{enumerate}
\end{definition}
Recall that a topological space $X$ is \emph{coherent} if $\mathcal{CO}(X)$ is closed under $\cap$ and forms a basis for $X$ and \emph{sober} if every completely prime filter in $\mathcal{CO}(X)$ is of the form $\mathcal{CO}_X(x)$ for some $x\in X$ where: \[\mathcal{CO}_X(x):=\{U\in\mathcal{CO}(X):x\in U\}.\] A \emph{spectral space} is a compact $T_0$-space that is coherent and sober. It follows by \cite[Corollary 5.5]{bez} that every UV-space is a spectral space and hence the specialization order $\leqslant$ associated with a UV-space $X$ defined by $x\leqslant y$ iff $x\in U$ implies $y\in U$ for all $U\in\mathcal{O}(X)$ is a partial order, since every spectral space is a $T_0$-space.

The following were introduced by McDonald \cite{mcdonald2} are form relational extensions of the UV-spaces described above. For the purposes of the following definition, let:  
\[\exists_{S_i}[\mathcal{COREG}_X(x)]:=\{\exists_{S_i}U:U\in\mathcal{COREG}_X(x)\}.\]

\begin{definition}
     A \emph{monadic UV-space} is a relational topological space of the following shape $\langle X;S,\tau\rangle$ such that the following conditions are satisfied: 
     \begin{enumerate}
         \item $\langle X;\tau\rangle$ is a UV-space; 
         \item $S$ is an equivalence relation; 
         \item $S[U]\in\mathcal{COREG}(X)$ whenever $U\in\mathcal{COREG}(X)$;
        \item if $x\overline{S}y$, then $\exists_{S}[\mathcal{COREG}_X(x)]\not=\exists_{S}[\mathcal{COREG}_X(y)]$.  
     \end{enumerate}
\end{definition}

\begin{definition}\label{cylindric uv space}
     An \emph{I-dimensional cylindric UV-space} is a space $\langle X;(S_i)_{i\in I},(\Delta_{i,k})_{i,k\in I},\tau\rangle$ such that: 
    \begin{enumerate}
        \item $\langle X;S_i,\tau\rangle$ is a monadic UV-space for each $i\in I$; 
      \item $S_i$ commutes with $S_k$, i.e., $S_i\circ S_k=S_k\circ S_i$ for all $i,k\in I$; 
        \item $\Delta_{i,k}\subseteq X$ with $\Delta^{**}_{i,k}=\Delta_{i,k}=\Delta_{k,i}$ and $\Delta_{i,i}=X$;
        \item if $i,l\not=k$, then $S_k[\Delta_{i,k}\cap \Delta_{k,l}]=\Delta_{i,l}$;
        \item if $i\not=k$, then $S_i[\Delta_{i,k}\cap U]\cap S_i[\Delta_{i,k}\cap U^*]=\emptyset$;   
        \item $\Delta_{i,k}\in\mathcal{CO}(X)$. 
    \end{enumerate}
\end{definition}

 \subsection{Duality Theory} In this final section, we obtain spectral duality results for $I$-dimensional cylindric implication algebras under the established isomorphism between $\mathbf{CIA}$ and $\mathbf{CBA}$ together with the duality results established in \cite{mcdonald2} for $\mathbf{CBA}$.  
\begin{definition}\label{filters}
    Let $A$ be a bounded implication algebra. A non-empty subset $\alpha\subseteq A$ is a filter provided: 
    \begin{enumerate}
        \item if $x\in\alpha$ and $x\cdot y=1$, then $y\in\alpha$; 
        \item if $x,y\in\alpha$, then $(x\cdot(y\cdot 0))\cdot 0\in\alpha$. 
    \end{enumerate}
    Moreover, if $\alpha$ is a filter of $A$, then $\alpha$ is a \emph{proper filter} if $\alpha\not=A$. 
\end{definition}
\begin{remark}
    It is obvious by Definition \ref{definition 2.7}, Theorem \ref{bia to poset}, and Theorem \ref{theorem 2.15} that the above definition of a (proper) filter of a bounded implication algebra corresponds to that of a (proper) filter in a Boolean algebra. 
\end{remark}

\begin{definition}\label{spectrum}
    If $A$ is an $I$-dimensional cylindric implication algebra define the \emph{spectrum} of $A$ as the topological space $X_A=\langle\mathfrak{F}(A);(S_i)_{i\in I},(\Delta_{i,k})_{i,k\in I},\tau\rangle$ such that: 
    \begin{enumerate}
        \item $\mathfrak{F}(A)$ is the collection of all proper filters of $A$; 
        \item $\tau$ is the topology generated by the basis $\mathcal{B}=\bigcup_{x\in A}\phi(x)$ where: \[\phi(x)=\{\alpha\in\mathfrak{F}(A):x\in \alpha\};\]  
        \item $S_i\subseteq\mathfrak{F}(A)\times\mathfrak{F}(X)$ is defined by: \[\alpha S_i\beta\Longleftrightarrow\{\nabla_ix:x\in\alpha\}=\{\nabla_ix:x\in\beta\};\] 
        \item $\Delta_{i,k}\subseteq\mathfrak{F}(A)$ is defined by $\Delta_{i,k}=\phi(d_{i,k})$. 
    \end{enumerate}
\end{definition}
For the purposes of Theorem \ref{rep theorem}, recall that if $X$ and $X'$ are topological spaces that come equipped with families of binary relations $(S_i)_{i\in I}\subseteq X\times X$ and $(S'_i)_{i\in I}\subseteq X'\times X'$, then a function $f\colon X\to X'$ is a \emph{relational homeomorphism} if $f$ is a homeomorphism satisfying $xS_iy$ iff  $f(x)S'_if(y)$ for all $i\in I$. 
\begin{theorem}\label{rep theorem}
    Let $A$ be an $I$-dimensional cylindric implication algebra and let $X$ be an $I$-dimensional cylindric UV-space. Then:
    \begin{enumerate}
    \item the spectrum $X_A$ is an $I$-dimensional cylindric UV-space; 
    \item the algebra $\mathcal{COREG}(X)$ is an $I$-dimensional cylindric implication algebra
        \item $A$ is isomorphic to $\mathcal{COREG}(X_A)$; 
        \item $X$ is relationally homeomorphic to $X_{\mathcal{COREG}(X)}$. 
    \end{enumerate}
\end{theorem}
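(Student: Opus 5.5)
The plan is to reduce everything to the duality for $I$-dimensional cylindric Boolean algebras established in \cite{mcdonald2}, transported along the isomorphism of categories $\mathbf{CIA}\cong\mathbf{CBA}$ of Theorem \ref{theorem 3.28}. Given $A$, I will pass to the cylindric Boolean algebra $A_+=\langle A;\odot,\oplus,-,c_0,c_1,(\nabla_i),(d_{i,k})\rangle$ supplied by Theorem \ref{cia to cba}. This algebra has the same carrier, the same quantifiers and the same diagonal constants as $A$.

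The first step is to check that the spectrum $X_A$ of Definition \ref{spectrum} coincides literally with the spectrum of $A_+$ used in \cite{mcdonald2}.
\begin{itemize}
\item By the Remark following Definition \ref{filters}, proper filters of $A$ are exactly the proper Boolean filters of $A_+$. This holds because $x\preceq y$ iff $x\cdot y=c_1$ is the Boolean order of $A_+$ (Theorem \ref{theorem 2.10}), and $(x\cdot(y\cdot c_0))\cdot c_0=x\odot y$ is the Boolean meet.
\item Hence $\phi(x)$, the basis $\mathcal{B}$, the relations $S_i$ (which are defined only via $\nabla_i=\exists_i$) and the sets $\Delta_{i,k}=\phi(d_{i,k})$ all agree with the constructions in \cite{mcdonald2}.
\end{itemize}
Part (1) then follows directly from the corresponding result of \cite{mcdonald2} that the spectrum of a cylindric Boolean algebra is an $I$-dimensional cylindric UV-space.

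For part (2), \cite{mcdonald2} gives that $\mathcal{COREG}(X)$, equipped with $\cap$, $U\mapsto U^*$, $\emptyset$, $X$, $\exists_{S_i}U=S_i[U]$ and $\Delta_{i,k}$, is an $I$-dimensional cylindric Boolean algebra. Applying Theorem \ref{cba to cia} converts it into an $I$-dimensional cylindric implication algebra. Concretely, the implication is $U\cdot V=\pi_c(U,V)=(U^*\vee V)$, where $\vee$ is the regular-open join $(U^*\cap V^*)^*$, and $c_0=\emptyset$. That is the structure meant by ``the algebra $\mathcal{COREG}(X)$''.

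Part (3): \cite{mcdonald2} gives a cylindric Boolean isomorphism $\phi\colon A_+\to\mathcal{COREG}(X_{A_+})$. Since $X_A=X_{A_+}$ by the first step, $\phi$ has the correct codomain. The functor $\mathbf{CBA}\to\mathbf{CIA}$ of Theorem \ref{theorem 3.28} sends isomorphisms to isomorphisms, and by the round-trip identity $(A_+)^+=A$ the map $\phi$ is then a $\mathbf{CIA}$-isomorphism from $A$ onto $\mathcal{COREG}(X_A)^+$. If one prefers a direct check, it suffices to verify $\phi(x\cdot y)=\phi(x)^*\vee\phi(y)$, $\phi(c_0)=\emptyset$, $\phi(\nabla_ix)=S_i[\phi(x)]$ and $\phi(d_{i,k})=\Delta_{i,k}$. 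Each of these is a Boolean identity already established in \cite{mcdonald2}, once $x\cdot y=-x\oplus y$ is unfolded via Theorem \ref{theorem 2.12}.

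Part (4) is purely a statement about spaces. The only subtlety is that $X_{\mathcal{COREG}(X)}$ built from the implication structure must equal the one built from the Boolean structure. This follows by the same filter comparison as in the first step, so the relational homeomorphism $x\mapsto\mathcal{COREG}_X(x)$ of \cite{mcdonald2} applies unchanged.

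I expect the main obstacle to be the bookkeeping in the first step. One must make sure Definition \ref{filters} (with $1,0$ read as $c_1,c_0$) matches Boolean filters exactly, including non-emptiness and upward closure. One must also confirm that the $S_i$ of Definition \ref{spectrum} match the relation used in \cite{mcdonald2}, rather than, say, a relation defined by $\nabla_i$-images of filters intersected with $\text{ran}(\nabla_i)$. If the definitions there differ, I would first show that $\{\nabla_ix:x\in\alpha\}$ generates the same filter of $\text{ran}(\nabla_i)$ as $\alpha\cap\text{ran}(\nabla_i)$, using Proposition \ref{prop 3.7} and monotonicity from Proposition \ref{prop 3.8}.
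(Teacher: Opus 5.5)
Your proposal is correct and follows the paper's proof. Both transport McDonald's representation for $I$-dimensional cylindric Boolean algebras \cite{mcdonald2} along $\mathbf{CIA}\cong\mathbf{CBA}$ (Theorems \ref{cia to cba}, \ref{cba to cia} and \ref{theorem 3.28}), using the same maps $\phi$ and $x\mapsto\mathcal{COREG}_X(x)$; your explicit check that $X_A$ coincides with the spectrum of $A_+$ (filters, basis, $S_i$, $\Delta_{i,k}$) is bookkeeping the paper leaves implicit, and your $S_i$ contingency is simpler than you suggest, since extensivity $x\preceq\nabla_ix$ with upward closure and Proposition \ref{prop 3.7} give $\{\nabla_ix:x\in\alpha\}=\alpha\cap\text{ran}(\nabla_i)$ outright, without needing the monotonicity of Proposition \ref{prop 3.8}.
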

\begin{proof}
Part 1 follows immediately by Theorem \ref{cia to cba} together with \cite[Lemma 7.5(1)]{mcdonald2}. For part 2, it follows by \cite[Lemma 7.5(2)]{mcdonald2} that the algebra $\langle\mathcal{COREG}(X);\cap,\sqcup,^*,\emptyset,X,(\exists_{S_i})_{i\in I},(\Delta_{i,k})_{i,k\in I}\rangle$ is an $I$-dimensional cylindric Boolean algebra under $U\sqcup V:=\text{Int}_{\leqslant}(\text{Cl}_{\leqslant}(U\cup V))$ and $U^*:=\text{Int}_{\leqslant}(X\setminus U)$, and hence by Theorem \ref{cba to cia} that $\langle\mathcal{COREG}(X);\pi_c,\emptyset,(\exists_{S_i})_{i\in I},(\Delta_{i,k})_{i,k\in I}\rangle$ is an $I$-dimensional cylindric implication algebra under $\pi_c(U,V):=U^*\sqcup V$. Then Theorem \ref{theorem 3.28} combined with \cite[Theorem 7.7]{mcdonald2} imply that every $I$-dimensional cylindric implication algebra $A$ is isomorphic to $\mathcal{COREG}(X_A)$ under $\phi(x)=\{\alpha\in\mathfrak{F}(A):x\in\alpha\}$ where $\phi(x\cdot y)=\pi_c(\phi(x),\phi(y))$, $\phi(c_0)=\emptyset$, $\phi(\nabla_ix)=\nabla_i\phi(x)$, $\phi(d_{i,k})=\Delta_{i,k}$, 
and that every $I$-dimensional cylindric UV-space $X$ is relationally homeomorphic to $X_{\mathcal{COREG}(X)}$ under $\psi(x)=\{U\in\mathcal{COREG}(X):x\in U\}$.   
\end{proof}
\begin{definition}\label{uv map}
    Let $X$ and $Y$ be $I$-dimensional cylindric UV-spaces. A function $f\colon X\to X'$ is a \emph{cylindric UV-map} if the following conditions are satisfied: 
    \begin{enumerate}
        \item $f$ is a spectral map, i.e., $f^{-1}[U]\in\mathcal{CO}(X)$ for each $U\in\mathcal{CO}(Y)$ 
        \item if $f(x)\leqslant' y'$, there exists $y\in X$ such that $x\leqslant y$ and $f(y)=y'$; 
        \item $S_{i}[f^{-1}[U]]=f^{-1}[S_{i}[U]]$ for all $U\in\mathcal{COREG}(X)$; 
   \item $f^{-1}[\Delta'_{i,k}]=\Delta_{i,k}$. 
    \end{enumerate}
\end{definition}

If $X$ and $Y$ are UV-spaces and $f\colon X\to X'$ is a function, then $f$ is a \emph{UV-map} provided conditions 1 and 2 in Definition \ref{uv map} are satisfied. Moreover, if $X$ and $Y$  are monadic UV-spaces, then $f\colon X\to Y$ is a \emph{monadic UV-map} provided conditions 1-3 are satisfied.
\begin{definition}
   By $\mathbf{UV}$ we denote the category of UV-spaces and UV-maps, by $\mathbf{MUV}$ we denote the category of monadic UV-spaces and monadic UV-maps, and by $\mathbf{CUV}$ we denote the category of cylindric UV-spaces and cylindric UV-maps.  
\end{definition}

 \begin{proposition}[\protect{\cite[Corollary 6.4]{bez}}]\label{preimage of reg is reg}
    Let $X$ and $X'$ be UV-spaces and let $f\colon X\to X'$ be a UV-map. Then $f^{-1}[U]\in\mathcal{COREG}(X)$ for each $U\in\mathcal{COREG}(X')$. 
\end{proposition}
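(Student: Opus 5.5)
The statement to prove is the Bezhanishvili--Holliday result (their Corollary 6.4): if $f\colon X\to X'$ is a UV-map, then $f^{-1}[U]\in\mathcal{COREG}(X)$ for every $U\in\mathcal{COREG}(X')$. We split this into two parts: compact openness of the preimage, and regularity of the preimage in the specialization order.

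\textbf{Compact open.} Since $U\in\mathcal{COREG}(X')\subseteq\mathcal{CO}(X')$ and $f$ is a spectral map (condition 1 of Definition \ref{uv map}), we get $f^{-1}[U]\in\mathcal{CO}(X)$ immediately. In particular $f^{-1}[U]$ is open, hence an up-set in the specialization order $\leqslant$.

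\textbf{Regularity.} We use the characterization recalled earlier: an up-set $V$ is regular open iff $\text{Int}_{\leqslant}(\text{Cl}_{\leqslant}(V))=V$. Here $\text{Cl}_{\leqslant}(V)$ is the down-closure ${\downarrow}V$ and $\text{Int}_{\leqslant}(W)=\{x: {\uparrow}x\subseteq W\}$. The inclusion $V\subseteq\text{Int}_{\leqslant}\text{Cl}_{\leqslant}(V)$ holds for any up-set, so only the reverse inclusion needs work. Put $V=f^{-1}[U]$ and suppose $x\notin V$, i.e.\ $f(x)\notin U$. We must find $y\geqslant x$ with no $z\geqslant y$ lying in $V$.

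\begin{enumerate}
\item We first check that $f$ is monotone. Spectral maps are continuous, since $\mathcal{CO}(X')$ is a basis by coherence, and continuous maps preserve specialization order.
\item Because $U$ is regular, $f(x)\notin U=\text{Int}_{\leqslant'}\text{Cl}_{\leqslant'}(U)$. Hence there is $y'\geqslant' f(x)$ with ${\uparrow}y'\cap U=\emptyset$.
\item By condition 2 of Definition \ref{uv map} (the back condition), there is $y\in X$ with $x\leqslant y$ and $f(y)=y'$.
\item For any $z\geqslant y$, monotonicity gives $f(z)\geqslant' y'$, so $f(z)\notin U$ and therefore $z\notin V$.
\end{enumerate}

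Thus $x\notin\text{Int}_{\leqslant}\text{Cl}_{\leqslant}(V)$, which gives the reverse inclusion and shows $V$ is regular open.

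\textbf{Main obstacle.} The only delicate point is making sure the topological and order-theoretic notions of regular open agree in a UV-space. Specifically, the closure and interior operations must be computed in the upset topology of the specialization order, as in the paper's definition of $\mathcal{REG}$, rather than in the spectral topology $\tau$. Since the paper defines regular opens via $\text{UP}(X;\leqslant)$, with $U^{**}=U$, the argument above works directly in those terms. If instead regularity were read in $\tau$, I would first invoke the fact from \cite{bez} that for compact opens of a UV-space the two notions coincide.
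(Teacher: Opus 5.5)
Your proof is correct. The paper gives no argument of its own here and simply cites \cite[Corollary 6.4]{bez}; your proof is essentially the standard reasoning behind that result, with compact-openness of $f^{-1}[U]$ coming from $f$ being spectral, and regularity in $\text{UP}(X;\leqslant)$ coming from the monotonicity of $f$ (via continuity) together with the back condition (2) of Definition \ref{uv map}.
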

\begin{theorem}\label{matching maps}
    Let $A$ and $B$ be $I$-dimensional cylindric implication algebras and let $h\colon A\to B$ be a homomorphism. Then there is an $I$-dimensional cylindric UV-map $f\colon X_B\to X_A$ where $X_B$ and $X_A$ are the $I$-dimensional cylindric UV-spaces induced by $B$ and $A$. Conversely, if $X$ and $Y$ are $I$-dimensional cylindric UV-spaces and $f\colon X\to Y$ is an $I$-dimensional cylindric UV-map, then there is a homomorphism $h\colon\mathcal{COREG}(Y)\to\mathcal{COREG}(X)$ where $\mathcal{COREG}(Y)$ and $\mathcal{COREG}(X)$ are the $I$-dimensional cylindric implication algebras induced by $Y$ and $X$. 
\end{theorem}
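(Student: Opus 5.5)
The strategy is to transport the problem across the isomorphism of Theorem \ref{theorem 3.28} and then invoke the corresponding morphism-level result for cylindric Boolean algebras and cylindric UV-spaces from \cite{mcdonald2}. The plan is therefore the same as for Theorem \ref{rep theorem}: reduce to the Boolean setting, apply the known duality there, and translate back.

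For the first half, let $h\colon A\to B$ be a homomorphism of $I$-dimensional cylindric implication algebras. By Theorem \ref{theorem 3.28} (using Theorem \ref{theorem 2.15} for the underlying bounded reducts), $h$ is also a homomorphism of cylindric Boolean algebras $h\colon A_+\to B_+$. It preserves $\odot,\oplus,-,c_0,c_1$, each $\nabla_i$, and each $d_{i,k}$.

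By the Remark following Definition \ref{filters}, the proper filters of $A$ and of $A_+$ coincide, so $X_A$ is literally the Boolean spectrum of $A_+$, and likewise for $B$. I would define
\[
f\colon X_B\to X_A,\qquad f(\beta)=h^{-1}[\beta].
\]
This is a proper filter because $h$ preserves $\cdot$ and $c_0$, hence preserves the order $\preceq$ and binary meets, and $h(c_0)=c_0\notin\beta$.

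Conditions 1 and 2 of Definition \ref{uv map} then come from the Bezhanishvili--Holliday duality \cite{bez}, using $f^{-1}[\phi(x)]=\phi(h(x))$. Condition 4 is immediate from this identity, since $f^{-1}[\Delta_{i,k}]=f^{-1}[\phi(d_{i,k})]=\phi(h(d_{i,k}))=\phi(d_{i,k})$. Condition 3 is the monadic part of the duality in \cite{mcdonald2}. There one identifies $S_i[\phi(x)]=\phi(\nabla_i x)$ and then uses $h(\nabla_i x)=\nabla_i h(x)$.

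For the converse, let $f\colon X\to Y$ be a cylindric UV-map. Set
\[
h(U)=f^{-1}[U]\quad\text{for } U\in\mathcal{COREG}(Y).
\]
Proposition \ref{preimage of reg is reg} ensures that $h(U)\in\mathcal{COREG}(X)$. The cylindric Boolean duality of \cite{mcdonald2} shows that $h$ is a homomorphism of cylindric Boolean algebras on $\mathcal{COREG}$. It preserves $\cap$, ${}^*$, $\emptyset$, and $\exists_{S_i}$, the last by condition 3 of Definition \ref{uv map}, and it preserves the diagonals $\Delta_{i,k}$ by condition 4.

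Applying Theorem \ref{theorem 3.28} again, $h$ is a homomorphism for the implication structures as well. Concretely,
\[
h(\pi_c(U,V))=h(U^*\sqcup V)=h(U)^*\sqcup h(V)=\pi_c(h(U),h(V)),
\]
together with $h(\emptyset)=\emptyset$.

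The main obstacle is bookkeeping rather than mathematics. One has to check that the spectrum $X_A$ of Definition \ref{spectrum}, built from implication-algebra filters and the relations $S_i$ defined through $\nabla_i$, literally coincides with the space that \cite{mcdonald2} associates to $A_+$. That is what licenses citing its morphism correspondence. I would settle this first, via the Remark on filters and the equality $\nabla_i=\exists_i$ on the common carrier. After that, both directions are direct translations.
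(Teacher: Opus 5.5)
Your proposal is correct and follows the paper's proof. The paper likewise sets $f=h^{-1}$ (preimage of filters) and $h=f^{-1}$ (preimage of regular opens), and justifies both directions by transporting along Theorem \ref{theorem 3.28} and citing \cite[Theorem 7.11]{mcdonald2}; your explicit checks of Definition \ref{uv map} via $f^{-1}[\phi(x)]=\phi(h(x))$ and $S_i[\phi(x)]=\phi(\nabla_i x)$ just unpack what that citation supplies.
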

\begin{proof}
    For part 1, set $f=h^{-1}$ and apply Theorem \ref{theorem 3.28} together with \cite[Theorem 7.11]{mcdonald2} and analogously for part 2, set $h=f^{-1}$ and apply Theorem \ref{theorem 3.28} together with \cite[Theorem 7.11]{mcdonald2}. 
\end{proof}
\begin{theorem}
    $\mathbf{CIA}$ is dually equivalent to $\mathbf{CUV}$. 
\end{theorem}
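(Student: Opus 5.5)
The plan is to compose the isomorphism of categories $\mathbf{CIA}\cong\mathbf{CBA}$ from Theorem \ref{theorem 3.28} with the dual equivalence between $\mathbf{CBA}$ and $\mathbf{CUV}$ established in \cite{mcdonald2}. A composite of an isomorphism with a dual equivalence is again a dual equivalence. To make this explicit rather than purely formal, I will define two contravariant functors directly on the implication side.

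The first functor is $\mathsf{X}\colon\mathbf{CIA}\to\mathbf{CUV}$. On objects it sends $A$ to its spectrum $X_A$, which is an object of $\mathbf{CUV}$ by Theorem \ref{rep theorem}(1). On morphisms it sends a homomorphism $h\colon A\to B$ to $h^{-1}\colon X_B\to X_A$. This is well defined: the preimage of a proper filter under a bounded implication homomorphism is a proper filter, since $h(c_0)=c_0$ and the filter clauses of Definition \ref{filters} are expressed in $\cdot$ and $c_0$. By Theorem \ref{matching maps} it is a cylindric UV-map.

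The second functor is $\mathsf{A}\colon\mathbf{CUV}\to\mathbf{CIA}$. On objects it sends $X$ to $\mathcal{COREG}(X)$, which is an object of $\mathbf{CIA}$ by Theorem \ref{rep theorem}(2). On morphisms it sends $f$ to $f^{-1}$, which is well defined by Proposition \ref{preimage of reg is reg} and is a homomorphism by Theorem \ref{matching maps}. Functoriality of both is immediate, since $(g\circ f)^{-1}=f^{-1}\circ g^{-1}$ and $\mathrm{id}^{-1}=\mathrm{id}$.

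Next I will exhibit the natural isomorphisms. The unit is $\phi_A\colon A\to\mathcal{COREG}(X_A)$, $\phi_A(x)=\{\alpha\in\mathfrak{F}(A):x\in\alpha\}$, an isomorphism by Theorem \ref{rep theorem}(3). The counit is $\psi_X\colon X\to X_{\mathcal{COREG}(X)}$, $\psi_X(x)=\{U\in\mathcal{COREG}(X):x\in U\}$, a relational homeomorphism by Theorem \ref{rep theorem}(4). For $\psi_X$ I must also check that it is an isomorphism in $\mathbf{CUV}$, i.e.\ that both $\psi_X$ and its inverse are cylindric UV-maps. A homeomorphism preserves and reflects compact opens and the specialization order, which gives conditions 1 and 2 of Definition \ref{uv map}. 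Relation preservation in both directions gives condition 3. Condition 4 holds because $\psi_X^{-1}[\phi(\Delta_{i,k})]=\Delta_{i,k}$ by construction.

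For naturality of $\phi$, take $h\colon A\to B$ and $x\in A$. I need $\phi_B(h(x))=(h^{-1})^{-1}[\phi_A(x)]$. Both sides equal $\{\beta\in\mathfrak{F}(B):x\in h^{-1}[\beta]\}$, so this holds. For naturality of $\psi$, take $f\colon X\to Y$ and $x\in X$. I need $\psi_Y(f(x))=(f^{-1})^{-1}[\psi_X(x)]$. Both sides equal $\{V\in\mathcal{COREG}(Y):x\in f^{-1}[V]\}$, so this holds too.

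The main obstacle is that all of the substantive content — well-definedness of the spectral maps and the two representation isomorphisms — is being imported from \cite{mcdonald2} through Theorem \ref{theorem 3.28}. One therefore has to make sure that transporting along that isomorphism commutes with the constructions, i.e.\ that $X_A$ computed from the implication structure agrees with the spectrum of the Boolean algebra $A_+$. My first move is to verify this directly. By the Remark following Definition \ref{filters}, implication filters coincide with Boolean filters. The relations $S_i$ and the sets $\Delta_{i,k}$ are defined from $\nabla_i$ and $d_{i,k}$, which are unchanged under the isomorphism of Theorem \ref{theorem 3.28}. So the two spectra literally coincide, and the naturality checks above then finish the proof.
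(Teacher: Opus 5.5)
Your proposal is correct and takes the paper's route. The paper states this theorem without a separate proof, as the combination of Theorem \ref{rep theorem} (objects, with $\phi$ and $\psi$ as unit and counit) and Theorem \ref{matching maps} (morphisms as preimage maps), both obtained by transporting McDonald's $\mathbf{CBA}$--$\mathbf{CUV}$ duality along the isomorphism of Theorem \ref{theorem 3.28}. Your functoriality and naturality checks, and your verification that $\psi_X$ is an isomorphism in $\mathbf{CUV}$ including the diagonal condition, which the paper's notion of relational homeomorphism omits, fill in details the paper leaves implicit.
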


\begin{remark}
    We conclude by noting that under Theorem \ref{theorem 2.15} and Theorem \ref{theorem 3.18} together with obvious restrictions to Definition \ref{spectrum}, Theorem \ref{rep theorem}, and Theorem \ref{matching maps} provides a dual equivalence between $\mathbf{IA_{0,1}}$ and the category $\mathbf{UV}$ of UV-spaces and UV-maps as well as between $\mathbf{MIA}$ and the category $\mathbf{MUV}$ of monadic UV-spaces and monadic UV-maps. 
\end{remark}
\section{Conclusions}
We have extended the work of Abbott \cite{abbott672} by introducing monadic and cylindric expansions of bounded implication algebras and demonstrating that $\mathbf{IA_{0,1}}$ is isomorphic to $\mathbf{BA}$, $\mathbf{MIA}$ is isomorphic to $\mathbf{MBA}$, and $\mathbf{CIA}$ is isomorphic to $\mathbf{CBA}$. As an immediate application, we have shown that $\mathbf{CIA}$ is dually equivalent to $\mathbf{CUV}$. An obvious restriction of this duality also yields a dual equivalence between $\mathbf{IA_{0,1}}$ and $\mathbf{UV}$ as well as between $\mathbf{MIA}$ and $\mathbf{MUV}$.   
\section*{Declarations}
\paragraph{\textbf{Funding}}
This work has been funded by a grant from the Programme Johannes Amos Comenius
under the Ministry of Education, Youth and Sports of the Czech Republic,
CZ.02.01.01/00/23$_{-}$025/0008711. 
\paragraph{\textbf{Ethical approval}}
Not applicable. 

\paragraph{\textbf{Availability of data and materials}}
Not applicable. 

\paragraph{\textbf{Competing interests}} The author declares no competing interests.


\begin{thebibliography}{99}
 \setlength{\itemsep}{0em}
    \setlength{\parskip}{0em}



\bibitem{abad} Abad, M., Cimadamore, C., Díaz Varela, J.: Topological representation for monadic implication algebras. Central European Journal of Mathematics. \textbf{7}, 299--309 (2009). \url{https://doi.org/10.2478/s11533-009-0002-y}
    
    \bibitem{abbott671} Abbott, J. C.: Semi-Boolean Algebra. \emph{Matematicki Vesnik}. \textbf{4}, 177-198 (1967) 
    \bibitem{abbott672} Abbott, J. C.: Implicational algebras. \emph{Bulletin math\'ematique de la Soci\'et\'e des Sciences Mathématiques de la R\'epublique Socialiste de Roumanie}. \textbf{11}, 3--23 (1697)
   



\bibitem{bez} Bezhanishvili, N., Holliday, W.: Choice-free Stone duality. \emph{Journal of Symbolic Logic}, \textbf{85}, 109--148 (2020). \url{https://doi.org/10.1017/jsl.2019.11} 






 \bibitem{halmos} Halmos, P.: Algebraic Logic. Chelsea Publishing Company, New York (1962)

 \bibitem{tarski1} Henkin, L., Monk, J.D., Tarski, A.: Cylindric algebras. Part I, volume 64 of Studies
in Logic and the Foundations of Mathematics. North-Holland Publishing Co., Amsterdam (1985)

\bibitem{tarski2} Henkin, L., Monk J.D., and Tarski, A.: Cylindric algebras. Part II, volume 115 of Studies
in Logic and the Foundations of Mathematics. North-Holland Publishing Co., Amsterdam (1985)


\bibitem{holliday} Holliday, W.: Possibility frames and forcing for modal logic. The Australasian journal of Logic. \textbf{22}, 44 -- 288 (2025) 

 \bibitem{mcdonald2} McDonald, J.: Canonical completion and duality for cylindric ortholattices and cylindric Boolean algebras. \emph{Studia Logica}, 2026. \url{ https://doi.org/10.1007/s11225-026-10234-z}


    \end{thebibliography}
\end{document}